\newtheorem{remark}[theorem]{Remark}
\newcommand{\pder}[2]{\frac{\partial #1}{\partial #2}}
\newcommand{\oprocendsymbol}{\hbox{$\bullet$}}
\newcommand{\oprocend}{\relax\ifmmode\else\unskip\hfill\fi\oprocendsymbol}
\newcommand{\longthmtitle}[1]{\mbox{}\textup{\textsl{(#1):}}}
\newcommand{\myclearpage}{\clearpage}
\renewcommand{\myclearpage}{}
\newcommand\scalemath[2]{\scalebox{#1}{\mbox{\ensuremath{\displaystyle #2}}}}
\begin{document}
\title{Controllability of coupled parabolic systems with multiple underactuations, part 2: null controllability}
  \author{Drew Steeves\footnotemark[1] \and 
         Bahman Gharesifard\footnotemark[2] \and
         Abdol-Reza Mansouri\footnotemark[2]}
\renewcommand{\thefootnote}{\fnsymbol{footnote}}
  \footnotetext[1]{Department of Mechanical and Aerospace Engineering, University of California, San Diego, EBU1 2101, La Jolla, CA, United States 92093, \texttt{dsteeves@eng.ucsd.edu}}
   \footnotetext[2]{Department of Mathematics and Statistics, Queen's University, Jeffery Hall, University Ave., Kingston, ON, Canada K7L3N6 (\texttt{bahman.gharesifard@queensu.ca}, \texttt{mansouri@mast.queensu.ca})}

\maketitle

\begin{abstract}
This paper is the second of two parts which together study the null controllability of a system of coupled parabolic PDEs. Our work specializes to an important subclass of these control problems which are coupled by first and zero-order couplings and are, additionally, underactuated. In the first part of our work~\cite{steeves2017part1}, we posed our control problem in a framework which divided the problem into interconnected components: the algebraic control problem, which was the focus of the first part; and the analytic control problem, whose treatment was deferred to this paper. We use slightly non-classical techniques to prove null controllability of the analytic control problem by means of internal controls appearing on every equation. We combine our previous results in~\cite{steeves2017part1} with the ones derived below to establish a null controllability result for the original problem.
\end{abstract}
\begin{keywords}
Controllability, Parabolic systems, Carleman estimates, Fictitious control method.
\end{keywords}
\begin{AMS}
  35K40, 
  93B05 
\end{AMS}
\section{Introduction}\label{sec:intro}
We begin with defining some notation and conventions.
\subsection{Notation and conventions}
Throughout this work, we define $\mathbb{N}^*:=\mathbb{N}\setminus\{0\}$, and similarly, $\mathbb{R}^*:=\mathbb{R}\setminus\{0\}$. For $n,k\in\mathbb{N}^*$, we denote the set of $n\times k$ matrices with real-valued entries by $\mathcal{M}_{n\times k}(\mathbb{R})$, and we denote the set of $n\times n$ matrices with real-valued entries by $\mathcal{M}_{n}(\mathbb{R})$. We denote the set of linear maps from a vector space $U$ to a vector space $V$ by $\mathscr{L}(U;V)$. For $(X,\mathcal{T}_X)$ a topological space and $U\subset X$, we denote the closure of $U$ by $\bar{U}$. We now recall the coupled parabolic system of interest.\subsection{A system of interest}
In this second part of this two-part work, the primary objective is maintained from the first part: that is, for $Q_T := (0,T)\times \Omega$ and $\Sigma_T := (0,T)\times \partial\Omega$ for some $T>0$, we wish to study the controllability properties of the system of coupled parabolic PDEs given by
\begin{equation}\label{eq:coupled_system}
\left\{\begin{aligned}
    \partial_t y &=\text{div}(D\nabla y)+G\cdot\nabla y+Ay+r, &&\text{in} \; Q_T, \cr
    y&=0, &&\text{on} \; \Sigma_T, \cr
    y(0,\cdot)&=y^0(\cdot), &&\text{in}\;\Omega,
  \end{aligned}\right.
\end{equation}
where $D:=\text{diag}(d_1,\dots,d_m)$, $G:=(g_{pk})_{1\leq p,k\leq m} \in \mathcal{M}_m(\mathbb{R}^n)$ and $A:=(a_{pk})_{1\leq p,k\leq m} \in \mathcal{M}_m(\mathbb{R})$. We associate to this system the differential operator
\begin{equation}\label{eq:coupled_divergence_operator_2}
\mathcal{L}y = \sum_{p=1}^m \left(-\text{div}(d_p\nabla y_p) -\sum_{k=1}^m g_{pk} \cdot \nabla y_k -\sum_{k=1}^m a_{pk} y_k\right)\mathbf{e_p},
\end{equation}
where $g_{pk}:=(g_{pk}^1,\dots,g_{pk}^n) \in \mathbb{R}^n$, $d_p\in\mathcal{M}_n(\mathbb{R})$ is symmetric and $\mathbf{e_p}$ is the  $p$-th canonical basis vector in $\mathbb{R}^m$, for $p\in\{1,\dots,m\}$. We call $g_{pk}$ the \emph{first-order coupling coefficients} and $a_{pk}$ the \emph{zero-order coupling coefficients}, which are constant in space and time.

In this work, we assume that $\mathcal{L}$ satisfies the uniform ellipticity condition: that is, there exists $C>0$ such that,
\begin{equation}\label{eq:ellipticity}
\sum_{i,j=1}^{n} d_p^{ij}\xi_i\xi_j \geq C|\xi|^{2}, \qquad \forall \; \xi \in\mathbb{R}^n.
\end{equation}

Suppose $r\in L^2(Q_T)^m$, $y^0\in L^2(\Omega)^m$. For $u,v\in H_0^1(\Omega)^m$, we define the bilinear form
\[
B[u,v]:=\int_{\Omega} \sum_{p,k=1}^m \left(\sum_{i,j=1}^n d_p^{ij} (\partial_{x_i} u_p) (\partial_{x_j} v_p) - \sum_{i=1}^n g_{pk}^i (\partial_{x_i} u_k) v_p - a_{pk}u_kv_p\right)\mathbf{e_p} dx.
\]
One has the following definition.
\begin{definition}\label{def:weak_solution}
Suppose $r\in L^2(Q_T)^m$, $y^0\in L^2(\Omega)^m$. A function
\[
\mathbf{y} \in L^2((0,T);H_0^1(\Omega))^m\cap H^1((0,T);H^{-1}(\Omega))^m
\]
is said to be a weak solution of system~\eqref{eq:coupled_system} provided that for every $v\in H_0^1(\Omega)^m$ and almost every $t\in [0,T]$
\begin{enumerate}
\item $\langle\frac{d}{dt}\mathbf{y}, v\rangle + B[\mathbf{y},v]=\int_{\Omega} \mathbf{r}^T v dx$, and;
\item $\mathbf{y}(0)=y^0$,
\end{enumerate}
where $\langle\cdot,\cdot\rangle$ denotes the appropriate duality pairing.
\end{definition}

It can be deduced, for example, from~\cite[Theorems 3 and 4, Section 7.1.2]{evans1988partial}, that for any initial condition $y^0\in L^2(\Omega)^m$ and $r\in L^2(Q_T)^m$, system~\eqref{eq:coupled_system} admits a unique solution. From now on, we mean by ``solution to a coupled parabolic system" the weak solution in the sense of Definition~\ref{def:weak_solution}.
\subsection{A parabolic regularity result}
We state a regularity result for the solution of system~\eqref{eq:coupled_system} which is essential in the work to follow.
\begin{theorem}\cite[Theorem 6, Subsection 7.1.3]{evans1988partial}\label{thm:smoothing_theorem}
For $d\in \mathbb{N}$, assume $y^0 \in H^{2d+1}(\Omega)^m$, $\mathbf{r} \in L^2((0,T);H^{2d}(\Omega))^m\cap H^d((0,T);L^2(\Omega))^m$, and assume that $\mathbf{y} \in L^2((0,T);H_0^1(\Omega))^m\cap H^1((0,T);H^{-1}(\Omega))^m$ is the solution of system~\eqref{eq:coupled_system}. Suppose also that the following compatibility conditions hold:
\[
\begin{cases}
g^0 := y^0 \in H_0^1(\Omega)^m; \\
g^1 := \mathbf{r}(0)-\mathcal{L}g^0 \in H_0^1(\Omega)^m; \\
\vdots \\
g^d := \frac{d^{d-1}\mathbf{r}}{dt^{d-1}}(0)-\mathcal{L}g^{d-1} \in H_0^1(\Omega)^m.
\end{cases}
\]
Then $\mathbf{y} \in L^2((0,T);H^{2d+2}(\Omega))^m\cap H^{d+1}((0,T);L^2(\Omega))^m$ and we have the estimate
\begin{align}\label{eq:smoothing_estimate}
||\mathbf{y}||_{L^2((0,T);H^{2d+2}(\Omega))^m\cap H^{d+1}((0,T);L^2(\Omega))^m} &\leq C\left( ||\mathbf{r}||_{L^2((0,T);H^{2d}(\Omega))^m\cap H^d((0,T);L^2(\Omega))^m}\right.\nonumber \\
&\left.+ ||y^0||_{H^{2d+1}(\Omega)^m}\right).
\end{align}
\end{theorem}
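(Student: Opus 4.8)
The plan is to follow the proof of the scalar result in \cite[Subsection 7.1.3]{evans1988partial}, carrying every intermediate estimate through for the vector-valued unknown $\mathbf{y}=(y_1,\dots,y_m)$ (the weak solution in the sense of Definition~\ref{def:weak_solution}), and to argue by induction on the regularity index $d\in\mathbb{N}$. The engine of the argument is Galerkin approximation: fix a basis $\{w_k\}$ of $H_0^1(\Omega)^m$ (for instance the eigenfunctions of the symmetric principal part), project system~\eqref{eq:coupled_system} onto $\mathrm{span}\{w_1,\dots,w_N\}$, and derive $N$-uniform a priori bounds for the finite-dimensional approximations $\mathbf{y}_N$; the desired estimate~\eqref{eq:smoothing_estimate} is then obtained by passing to the limit $N\to\infty$ and invoking weak lower semicontinuity of the norms.

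For the base case $d=0$ I would first establish the standard energy estimate controlling $\mathbf{y}$ in $L^2((0,T);H_0^1(\Omega))^m\cap H^1((0,T);H^{-1}(\Omega))^m$ in terms of the data, then bootstrap spatial regularity. Differentiating the Galerkin equations in $t$ and testing against $\partial_t\mathbf{y}_N$ yields a bound on $\partial_t\mathbf{y}$ in $L^2((0,T);L^2(\Omega))^m$; feeding this back into the identity $\mathrm{div}(D\nabla\mathbf{y})=\partial_t\mathbf{y}-G\cdot\nabla\mathbf{y}-A\mathbf{y}-\mathbf{r}$ and invoking elliptic regularity for the diagonal, symmetric, uniformly elliptic operator $-\mathrm{div}(D\nabla\,\cdot\,)$ upgrades $\mathbf{y}$ to $L^2((0,T);H^2(\Omega))^m$. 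The compatibility conditions $g^0,\dots,g^d\in H_0^1(\Omega)^m$ are exactly what is needed to bound the initial values of the successive time derivatives appearing in these estimates, so that no boundary-layer blow-up occurs at $t=0$.

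The inductive step increases $d$ by one: assuming the result through order $d-1$, I differentiate the system $d$ times in time and apply the induction hypothesis to the differentiated system, whose source is $\partial_t^{d}\mathbf{r}$ together with time derivatives of the coefficients (here zero, since $D$, $G$, $A$ are constant). Alternately trading one time derivative for two spatial derivatives via the elliptic estimate for $-\mathrm{div}(D\nabla\,\cdot\,)$ produces the full mixed norm $L^2((0,T);H^{2d+2}(\Omega))^m\cap H^{d+1}((0,T);L^2(\Omega))^m$ and the corresponding bound~\eqref{eq:smoothing_estimate}.

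The only genuinely new feature relative to the scalar case is the coupling, and this is precisely the step I expect to require the most care. The first- and zero-order coupling terms $G\cdot\nabla\mathbf{y}$ and $A\mathbf{y}$ intertwine the components, so the energy identities no longer split across the equations. However, because $D=\mathrm{diag}(d_1,\dots,d_m)$ is block-diagonal with each $d_p$ symmetric and uniformly elliptic, the principal part decouples and the ellipticity~\eqref{eq:ellipticity} continues to hold component-wise; the coupling terms, being of lower order with constant coefficients, are controlled by the Cauchy--Schwarz and Young inequalities and absorbed into the elliptic and time-derivative terms at the cost of a constant, with the residual lower-order contributions handled by Gronwall's inequality. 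This is what makes the scalar machinery of \cite{evans1988partial} go through for the coupled system~\eqref{eq:coupled_system}.
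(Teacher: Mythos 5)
Your proposal is correct and coincides with the paper's treatment: the paper does not prove this theorem at all, but imports it from \cite[Theorem 6, Subsection 7.1.3]{evans1988partial} with the remark (in Subsection~\ref{subsection:well_posedness}) that one follows Evans' Section~7 verbatim, rewriting each intermediary estimate for the vector-valued unknown. Your sketch — Galerkin approximation, induction on $d$ trading time derivatives for spatial ones via elliptic regularity of the block-diagonal principal part, and absorbing the constant-coefficient first- and zero-order couplings by Young's and Gronwall's inequalities — is exactly that adaptation.
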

\subsection{The control problem}
This work specializes to the case of internal (or distributed) control: that is, for $\omega\subset \Omega$ nonempty and open, we study the case where $r=\mathbbm{1}_{\omega}Bu$, for $B= \left(\text{Id}_{c}\quad 0_{c\times (m-c)}\right)^T\in\mathcal{M}_{m\times c}(\mathbb{R})$ and $1\leq c\leq m$, and henceforth, we denote by $q_T$ the set $(0,T)\times\omega$. 

An interesting control problem that arises in many engineering applications is underactuation, that is, when $c<m$. Our work will further specialize to this case, where there are currently few results for \emph{first and zero-order couplings}, for arbitrary $m$ and $c<m-1$ (even for the case of constant coefficients). We focus on a particular type of controllability property, which is defined next.
\begin{definition}
We say that~\eqref{eq:coupled_system} is null controllable in time $T$ if for every initial condition $y^0\in L^2(\Omega)^m$, there exists $u\in L^2(Q_T)^{c}$ such that the solution $y \in L^2((0,T);H_0^1(\Omega))^m\cap H^1((0,T);H^{-1}(\Omega))^m$ to~\eqref{eq:coupled_system} satisfies $y(T)=0$ in $ \Omega $.
\end{definition}

This work's main objective that we aim to achieve by selecting appropriate control inputs is null controllability of system~\eqref{eq:coupled_system}. Next, we recall the method which we introduced in~\cite{steeves2017part1}; we employ this method to achieve our main objective. 
\subsection{Fictitious control method}\label{sub:fictitious_control_method}
We first described following control problem, referred to as the \emph{analytic control problem}: for any $\tilde{y}^0\in L^2(\Omega)^m$, proving the existence of $(\tilde{y},\tilde{u})$ a solution of
\begin{equation}\label{eq:analytic_problem}
\left\{\begin{aligned}
    \partial_t \tilde{y} &=\text{div}(D\nabla \tilde{y})+G\cdot \nabla \tilde{y} + A\tilde{y} +\mathcal{N}\left(\mathbbm{1}_{\omega} \tilde{u}\right), &&\text{in} \; Q_T, \cr
    \tilde{y}&=0, &&\text{on} \; \Sigma_T, \cr
    \tilde{y}(0,\cdot)&=\tilde{y}^0(\cdot), &&\text{in} \; \Omega,
  \end{aligned}\right.
\end{equation}
such that $\tilde{y}(T,\cdot)=0$, where $\mathcal{N}$ is a differential operator that was chosen to be the identity in~\cite{steeves2017part1}, $\tilde{u}$ \emph{acts on all equations} in~\eqref{eq:analytic_problem}, and we denote by $\mathbbm{1}_{\omega}$ a \emph{smooth version} of the indicator function (this can be constructed via mollification; cf. relation~\eqref{eq:theta_definition} for its exact definition). 

We next consider a different control problem, referred to as the \emph{algebraic control problem}: proving the existence of a solution $(\hat{y},\hat{u})$ of
\begin{equation}\label{eq:algebraic_problem}
\left\{\begin{aligned}
    \partial_t \hat{y} &=\text{div}(D\nabla \hat{y})+G\cdot \nabla \hat{y} + A\hat{y} +B\hat{u} + \mathcal{N}\left(\mathbbm{1}_{\omega} \tilde{u}\right), &&\text{in} \; Q_T, \cr
    \hat{y}&=0, &&\text{on} \; \Sigma_T, \cr
    \hat{y}(0,\cdot)&=\hat{y}(T,\cdot)=0, &&\text{in} \; \Omega,
  \end{aligned}\right.
\end{equation}
\emph{where} $\hat{u}$ \emph{acted only on the first} $c$ \emph{equations}.

We defined the notion of \emph{algebraic solvability} of~\eqref{eq:algebraic_problem} in~\cite{steeves2017part1}, which is a property that enabled us to algebraically ``invert" the differential operator associated to~\eqref{eq:algebraic_problem} and recover the solution to this control problem locally. We obtained the following result, see~\cite[Proposition~4.10]{steeves2017part1}.
\begin{proposition}\label{prop:algebraic_solvability}
Given $m, n$ and $c$ in $\mathbb{N}^*$ with $\lfloor\frac{m}{2}\rfloor+1\leq c\leq m$, if 
\begin{enumerate}
\item $c\geq h$, where $h:=(m-c)(n+1)$, and;
\item the matrix $C\in\mathcal{M}_{h}(\mathbb{R})$ given by 
\begin{equation}\label{eq:C_def}
\scalemath{0.9}{
C:=\left(\begin{array}{c c c c c c c c c c}
a_{(c+1)\alpha_1}&\dots&a_{m\alpha_1}&g_{(c+1)\alpha_1}^1&\dots&g_{m\alpha_1}^1&\dots&g_{(c+1)\alpha_1}^n&\dots&g_{m\alpha_1}^n\\
a_{(c+1)\alpha_2}&\dots&a_{m\alpha_2}&g_{(c+1)\alpha_2}^1&\dots&g_{m\alpha_2}^1&\dots&g_{(c+1)\alpha_2}^n&\dots&g_{m\alpha_2}^n\\
\vdots&&\vdots&\vdots&&\vdots&&\vdots&&\vdots\\
a_{(c+1)\alpha_{h}}&\dots&a_{m\alpha_{h}}&g_{(c+1)\alpha_{h}}^1&\dots&g_{m\alpha_{h}}^1&\dots&g_{(c+1)\alpha_{h}}^n&\dots&g_{m\alpha_{h}}^n
\end{array}\right)}
\end{equation}
is non-singular for any $\{\alpha_1,\dots,\alpha_{h}\}\subseteq\{1,\dots,c\}$ with $\alpha_1\neq\dots\neq\alpha_{h}$, where $g_{ij}^k$ is the $k$-th component of $g_{ij}$, for $k\in\{1,\dots,n\}$ and for $i,j\in\{1,\dots,m\}$,
\end{enumerate}
then~\eqref{eq:algebraic_problem} is algebraically solvable in $q_T$.
\end{proposition}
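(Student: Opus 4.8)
The plan is to build on the reduction already in place. By the discussion preceding the statement, algebraic solvability of~\eqref{eq:algebraically_solvable_2} is equivalent to~\eqref{eq:reduced_diff_operator_system}, and dually to exhibiting a left inverse $\bar{\mathcal{B}}^*$ of $\bar{\mathcal{L}}^*$ realizing~\eqref{eq:reduced_diff_operator_system_adjoint}; concretely I must produce a linear partial differential operator that recovers the analytic unknowns $\hat{\psi}_{c+1},\dots,\hat{\psi}_m$ from the data $\phi=\bar{\mathcal{L}}^*\hat{\psi}$ of~\eqref{eq:reduced_system_of_equations}, as in~\eqref{eq:algebraic_solvability_reformulation}. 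The regularity of $\hat\psi$ needed for the construction to be meaningful is supplied by Remark~\ref{remark:prolongation_regularity} together with Theorem~\ref{thm:smoothing_theorem}, so I may treat $\hat\psi$ as smooth throughout.

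The key idea is to \emph{ignore the $m-c$ parabolic equations} of~\eqref{eq:reduced_system_of_equations} entirely and work only with its first $c$ equations, $\sum_{j=c+1}^m (g_{jl}\cdot\nabla - a_{jl})\hat\psi_j = \phi_l$ for $l\in\{1,\dots,c\}$. Writing $g_{jl}\cdot\nabla=\sum_{k=1}^n g_{jl}^k\partial_{x_k}$, each of these is a constant-coefficient linear combination of exactly the $h=(m-c)(n+1)$ quantities $\hat\psi_j$ and $\partial_{x_k}\hat\psi_j$, $j\in\{c+1,\dots,m\}$, $k\in\{1,\dots,n\}$. Adopting the algebraic viewpoint, treating each of these as an independent algebraic unknown, the first-order block becomes a linear algebraic system in $h$ unknowns with $c$ equations. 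This is precisely where hypothesis (i) enters: $c\ge h$ guarantees at least $h$ such equations, so the block is square after selecting $h$ of them.

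I would then fix any $h$ distinct indices $\alpha_1,\dots,\alpha_h\in\{1,\dots,c\}$ and order the unknowns as $(\hat\psi_{c+1},\dots,\hat\psi_m,\partial_{x_1}\hat\psi_{c+1},\dots,\partial_{x_n}\hat\psi_m)$. The $h\times h$ coefficient matrix of the selected equations is then exactly the matrix $C$ of~\eqref{eq:C_def}, up to the sign of its first $m-c$ columns (the zero-order terms enter with $-a_{jl}$), which leaves non-singularity unchanged. Hypothesis (ii) makes $C$ invertible, so $C\hat X=(\phi_{\alpha_1},\dots,\phi_{\alpha_h})^T$ has the unique solution $\hat X=C^{-1}(\phi_{\alpha_1},\dots,\phi_{\alpha_h})^T$; its first $m-c$ entries express $\hat\psi_{c+1},\dots,\hat\psi_m$ as fixed linear combinations of $\phi_{\alpha_1},\dots,\phi_{\alpha_h}$. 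I define $\bar{\mathcal{B}}^*$ to be the zeroth-order, constant-coefficient operator reading off these combinations, which is a legitimate linear partial differential operator of order zero.

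It remains to verify the operator identity~\eqref{eq:reduced_diff_operator_system_adjoint} on all smooth $\hat\psi$. Substituting $\phi_{\alpha_i}=\bar{\mathcal{L}}^*_{\alpha_i}\hat\psi$ into the definition of $\bar{\mathcal{B}}^*$, the genuine values $\hat\psi_j$ and $\partial_{x_k}\hat\psi_j$ form a solution of $C\hat X=(\phi_{\alpha_1},\dots,\phi_{\alpha_h})^T$, and uniqueness forces the read-off entries to equal $\hat\psi_{c+1},\dots,\hat\psi_m$; thus $\bar{\mathcal{B}}^*\circ\bar{\mathcal{L}}^*=\bar{\mathcal{N}}^*$, and dualizing yields~\eqref{eq:reduced_diff_operator_system} and hence~\eqref{eq:algebraically_solvable_2}. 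I expect the main obstacle to be conceptual rather than computational: the crux is recognizing that overdetermination should be sought \emph{within the first-order block alone}, which confines the algebraic unknowns to the $h$ first-derivative-level quantities and makes $C$, rather than a larger prolonged matrix carrying second-order and time derivatives, the object governing solvability; this is exactly what renders (i) and (ii) the natural hypotheses. The remaining work is bookkeeping: matching unknowns to the columns of~\eqref{eq:C_def} with correct signs, and checking via Remark~\ref{remark:prolongation_regularity} that, because $\bar{\mathcal{B}}^*$ has order zero, the reconstructed operator $\mathcal{B}$ has order at most two in space and one in time, so the controls $\mathbbm{1}_\omega\tilde u$ recovered through the last $c$ lines of~\eqref{eq:multiple_underactuations_equation2} remain admissible.
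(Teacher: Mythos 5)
Your argument is correct, but it is genuinely different from the paper's. The paper prolongs system~\eqref{eq:reduced_system_of_equations} $p$ times until $E(p)>U(p)$ (Lemma~\ref{lemma:prolongation_always_works}), encodes the result in a large sparse matrix $\bar{L}^*$, and then invokes the Dulmage--Mendelsohn decomposition of Theorem~\ref{thm:dmperm_decomposition} -- choosing a maximum matching containing all the $-1$ entries and analyzing the sets $VR$, $VC$ -- to locate an $h\times h$ block $\bar{L}^*_{34}$ multiplying the analytic unknowns, whose invertibility is then traced back to that of $C$. You instead observe that under hypothesis (i) the first $c$ equations of~\eqref{eq:reduced_system_of_equations} alone already constitute at least $h$ linear algebraic relations in exactly the $h$ quantities $\hat{\psi}_j$, $\partial_{x_k}\hat{\psi}_j$, with coefficient matrix a row-selection of $C$ up to the sign of the zero-order columns; inverting $C$ pointwise and reading off the first $m-c$ entries gives a \emph{zeroth-order} $\bar{\mathcal{B}}^*$, and the uniqueness argument you give (the true values satisfy the square non-singular system, hence equal its unique solution) correctly yields $\bar{\mathcal{B}}^*\circ\bar{\mathcal{L}}^*=\bar{\mathcal{N}}^*$; dualizing, which is legitimate for these constant-coefficient operators, gives~\eqref{eq:reduced_diff_operator_system} and hence~\eqref{eq:algebraically_solvable_2} via the paper's reduction. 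Your route buys three things: it dispenses entirely with prolongation and matching theory in the regime $c\geq h$; it shows the hypothesis can be weakened to non-singularity of $C$ for \emph{some} choice of $\{\alpha_1,\dots,\alpha_h\}\subseteq\{1,\dots,c\}$ rather than every choice; and, since $\bar{\mathcal{B}}^*$ has order zero, the operator $\mathcal{B}$ reconstructed through~\eqref{eq:multiple_underactuations_equation2} has order only two in space and one in time, so the controls $\mathbbm{1}_\omega\tilde{u}$ need far less regularity than the $p+2$ spatial derivatives anticipated in Remark~\ref{remark:prolongation_regularity}. What the paper's heavier machinery buys is uniformity: the prolongation-plus-decomposition scheme extends to the case $c<h$ (Example~\ref{example:c_less_than_h}), where your first-order block is underdetermined and differentiation of the equations is genuinely required to manufacture enough relations; your argument, as stated, is confined to the hypothesis $c\geq h$ of the proposition.
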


Importantly, the ``inverse" differential operator that we recovered in~\cite{steeves2017part1}, denoted by $\mathcal{B}$, was of differential order at most $p+2$ in space. This differential order is of consequence in Section~\eqref{sec:null_controllability_analytic_system}, where we require the controls in the analytic system~\eqref{eq:analytic_problem} to be regular enough to withstand these $p+2$ spatial derivatives. This regularity on the controls is necessary for the solution that was constructed for the algebraic problem to be well-defined.

With the algebraic problem resolved, solving the analytic problem is this work's secondary objective. Achieving this secondary objective will allow us to attain this work's main objective, as will be shown in Section~\ref{sec:null_controllability_analytic_system}.
\subsection{Statement of contributions}
The first contribution is a partial generalization of~\cite[Theorem 1]{duprez2016indirect}. In particular, our result gives a sufficient condition for the null and approximate controllability of an underactuated system of coupled parabolic PDEs, with constant first and zero-order couplings, when more than half of the equations are actuated,and additionally, is large enough. Importantly, this controllability condition applies to systems with multiple underactuations. Furthermore, this condition, which requires the rank of a matrix containing some of the coupling coefficients as entries to be full rank, is generic.
The technique used to prove this result is adapted from~\cite{coron2014local}. 

Our second contribution is Proposition~\ref{prop:carleman_estimate}, which is an extension of~\cite[Proposition 2.2]{duprez2016indirect}. Specifically, our Carleman estimate contains higher differential order terms on its lefthand side, which allows us to construct very regular controls in Proposition~\ref{prop:analytic_null_controllability}. Importantly, these highly regular controls are necessary when applying Theorem~\ref{thm:main_theorem} to problems with many underactuations.
\section{Main result}\label{sec:problem}
The main controllability theorem of this work is stated next, where we assume that more than half of the equations in system~\eqref{eq:coupled_system} are actuated.
\begin{theorem}\label{thm:main_theorem}
For a fixed $m$ in $\mathbb{N}^*$, suppose $\Omega\subset \mathbb{R}^n$ nonempty, open and bounded. Furthermore, suppose $\Omega$ is connected and of class $C^{p+2}$. For $\lfloor\frac{m}{2}\rfloor+1\leq c\leq m$, if 
\begin{enumerate}
\item $c\geq h$, where $h:=(m-c)(n+1)$, and;
\item the matrix $C\in\mathcal{M}_{h}(\mathbb{R})$ in~\eqref{eq:C_def} is non-singular for any $\{\alpha_1,\dots,\alpha_{h}\}\subseteq\{1,\dots,c\}$ with $\alpha_1\neq\dots\neq\alpha_{h}$, where $g_{ij}^k$ is the $k$-th component of $g_{ij}$, for $k\in\{1,\dots,n\}$ and for $i,j\in\{1,\dots,m\}$,
\end{enumerate}
then the system~\eqref{eq:coupled_system} is null controllable in time $T$.
\end{theorem}
\begin{remark}\label{rk:c_less_than_h}
In~\cite[Example 4.11]{steeves2017part1}, we addressed the scenario $c<h$, where, for small systems in low dimension, one can employ the treatment in~\cite{steeves2017part1} to derive a generic rank condition similar to the one above that ensures algebraic solvability with $\mathcal{B}$ of differential order at most $p+2$ in space.
\end{remark}

The rest of this work is devoted to proving the above result. First, we will resolve the analytic control problem in Section~\ref{sec:null_controllability_analytic_system}; next, we will utilize the solutions to the algebraic and analytic control problems to solve our original control problem in Section~\ref{sec:carleman_estimate}, which is the null controllability of the underacted system~\eqref{eq:coupled_system}.

\section{A Carleman estimate for the analytic problem}\label{sec:carleman_estimate}
In this section, we study the analytic system:
\begin{equation}\label{eq:analytic_problem_restatement}
\left\{\begin{aligned}
    \partial_t \tilde{y} &=\text{div}(D\nabla \tilde{y})+G\cdot \nabla \tilde{y} + A\tilde{y} +\mathbbm{1}_{\omega} \tilde{u}, &&\text{in} \; Q_T, \cr
    \tilde{y}&=0, &&\text{on} \; \Sigma_T, \cr
    \tilde{y}(0,\cdot)&=\tilde{y}^0(\cdot), &&\text{in} \; \Omega.
  \end{aligned}\right.
\end{equation}
The goal of this section is to prove that the solution $(\tilde{y},\tilde{u})$ to the analytic control system~\eqref{eq:analytic_problem_restatement} satisfies the following so-called \emph{weighted observability inequality}, which will help us deduce its null controllability. To this end, we consider the adjoint system to system~\eqref{eq:analytic_problem_restatement} given by
\begin{equation}\label{eq:coupled_adjoint_2}
\left\{\begin{aligned}
    -\partial_t \tilde{\psi} &=\text{div}(D\nabla \tilde{\psi})-G^*\cdot \nabla \tilde{\psi}+A^*\tilde{\psi}, &&\text{in} \; Q_T,\cr
    \tilde{\psi}&=0, &&\text{on} \; \Sigma_T, \cr
    \tilde{\psi}(T,\cdot)&=\tilde{\psi}^0(\cdot), &&\text{in} \; \Omega,
  \end{aligned}\right.
\end{equation}
where $\tilde{\psi}^0 \in L^2(\Omega)^m$.

We state the weighted observability inequality we aim to establish.
\begin{proposition}\label{prop:observability_inequality}
For every $\tilde{\psi}^0\in L^2(\Omega)^m$, the solution $\tilde{\psi}$ of system~\eqref{eq:coupled_adjoint_2} satisfies
\begin{equation}\label{eq:observability_inequality_carleman}
\int_{\Omega} \left\Vert \tilde{\psi}(0,x)\right\Vert_1^2 dx \leq C_{obs}\iint_{(0,T)\times\omega_0} e^{-2s_1\alpha}\xi^{2p+7}\left\Vert\tilde{\psi}(t,x)\right\Vert_1^2 dx dt,
\end{equation}
where $C_{obs}:=CT^9e^{C(1+3T/4+1/T^5)}>0$ and $\Vert\cdot\Vert_1$ denotes the Euclidean norm. We call~\eqref{eq:observability_inequality_carleman} a weighted observability inequality with weight $\rho:=e^{-2s_1\alpha}\xi^{2p+7}$, for $\alpha$ and $\xi$ defined below in~\eqref{eq:alpha_weight} and~\eqref{eq:xi_weight}, respectively, where $s_1:=\sigma (T^5+T^{10})$ for $\sigma>0$ depending on $\Omega$ and $\omega_0$.
\end{proposition}

We utilize the \emph{Carleman estimate} technique to develop an estimate which will help us establish the observability inequality stated above; the proof of Proposition~\ref{prop:observability_inequality} follows from Proposition~\ref{prop:carleman_estimate} and is given in the Appendix. This section builds upon the technique developed in~\cite[Section 2.2]{duprez2016indirect}: in particular, it incorporates the higher-order terms found on the lefthand side of~\eqref{eq:carleman_estimate} which allow us to construct highly regular controls for system~\eqref{eq:analytic_problem_restatement}. Constructing a solution $(\tilde{y},\tilde{u})$ to system~\eqref{eq:analytic_problem_restatement} with highly regular controls and satisfying $\tilde{y}(T,\cdot)=0$ is treated in Section~\ref{sec:null_controllability_analytic_system}.
%
\subsection{Some notation and technical results}
We begin by introducing some notation. For the multi-index $\beta$ of length $l$ consisting of multi-indices, consider the $l$\textsuperscript{th}-order tensor given by $C:=(C_\beta)_{\beta}$, where $\beta_i$ has length $n_i$, for $n_i\in\mathbb{N}^*$, for $i\in\{1,\dots,l\}$. We associate to $C$ the element-wise norm:
\[
\Vert\cdot\Vert_l:=\left(\sum_{i_1=1,\dots,i_l=1}^{n_1,\dots,n_l}C_{\beta_1(i_1),\dots,\beta_l(i_l)}^2\right)^{1/2}.
\]
An equivalent interpretation of $\Vert\cdot\Vert_l$ is the following: given a $l$\textsuperscript{th}-order tensor $C$, one \emph{vectorizes} $C$ into a vector of length $\sum_{i=1}^l n_i$ and then applies the Euclidean norm to recover $\Vert\cdot\Vert_l$. Fix a sequence $(\omega_i)_{i=0}^{p+2}$ of nonempty open subsets of $\omega$ such that
\[
\begin{cases}
\bar{\omega}_i \subset \omega_{i-1}, \quad \text{for} \quad i\in\{1,\dots,p+2\},\\
\bar{\omega}_0 \subset \omega.
\end{cases}
\]
The next result is an adaptation of~\cite[Lemma 1.1]{fursikov1996controllability} (see also~\cite[Lemma 2.68]{coron2007control}).
\begin{lemma}\label{lemma:morse_function}
Assume that $\Omega$ is of class $C^{p+2}$ and connected. Then there exists $\eta^0\in C^{p+2}(\bar{\Omega})$ such that
\begin{equation}\label{eq:existence_eta}
\left\{\begin{aligned}
\left\Vert\nabla\eta^0\right\Vert_1&\geq \kappa, \quad &&\text{in} \quad \Omega\setminus \omega_{p+2},\cr
\eta^0 &> 0, \quad &&\text{in} \quad \Omega, \cr
\eta^0 &= 0, \quad &&\text{on} \quad \partial\Omega,
  \end{aligned}\right.
\end{equation}
for some $\kappa>0$.
\end{lemma}

\begin{remark}\label{rk:r_large_enough}
In~\eqref{eq:bound_on_spatial_derivatives_weight}, we require $\eta^0$ to be $(p+2)$--times differentiable; this is why we require spatial domain boundary regularity in Theorem~\ref{thm:main_theorem}.
\end{remark}

For $(t,x)\in Q_T$ we define
\begin{equation}\label{eq:alpha_weight}
\alpha(t,x):=\frac{e^{12\lambda||\eta^0||_\infty} - e^{\lambda(10||\eta^0||_\infty+\eta^0(x))}}{t^5(T-t)^5} 
\end{equation}
and
\begin{equation}\label{eq:xi_weight}
\xi(t,x):=\frac{e^{\lambda(10||\eta^0||_\infty+\eta^0(x))}}{t^5(T-t)^5}. 
\end{equation}
Additionally, for $t\in(0,T)$ we define
\begin{equation}\label{eq:alpha_star_weight}
\alpha^*(t):=\max_{x\in\bar{\Omega}} \alpha(t,x)
\end{equation}
and
\begin{equation}\label{eq:xi_star_weight}
\xi^*(t):=\min_{x\in\bar{\Omega}} \xi(t,x).
\end{equation}
For $s, \lambda > 0$ and $u\in L^2((0,T);H_0^1(\Omega))\cap H^1((0,T);H^{-1}(\Omega))$, let us define
\begin{equation}\label{eq:carleman_integrals}
\mathcal{I}(s,\lambda;u):=s^3\lambda^4\iint_{Q_T}e^{-2s\alpha}\xi^3|u|^2dxdt+s\lambda^2\iint_{Q_T}e^{-2s\alpha}\xi\left\Vert\nabla u\right\Vert_1^2dxdt.
\end{equation}
In the work to follow, for $u\in L^2((0,T);H_0^1(\Omega))^m\cap H^1((0,T);H^{-1}(\Omega))^m$, we use a slight abuse of notation and define $\mathcal{I}(s,\lambda;u)$ as above but with $|\cdot|$ replaced by $\Vert\cdot\Vert_1$, and with $\Vert\cdot\Vert_1$ replaced by $\Vert\cdot\Vert_2$.

We now state a Carleman estimate result for the heat equation; the proof is quite technical and is omitted here.
\begin{lemma}\cite[Theorem 1]{fernandez2006null}\label{lemma:carleman_estimate_heat}
Assume that $d>0$, $u^0\in L^2(\Omega)$, $f_1\in L^2(Q_T)$ and $f_2\in L^2(\Sigma_T)$. Then there exists a constant $C:=C(\Omega,\omega_{p+2})>0$ such that the solution to
\[
\left\{\begin{aligned}
    -\partial_t u &=\text{div}(d\nabla u)+f_1, &&\text{in} \; Q_T, \cr
    \pder{u}{n}&=f_2, &&\text{on} \; \Sigma_T, \cr
    u(T,\cdot)&=u^0(\cdot), &&\text{in} \; \Omega,
  \end{aligned}\right.
\]
satisfies
\begin{align*}
\mathcal{I}(s,\lambda;u) \leq C\left(s^3\lambda^4\iint_{(0,T)\times\omega_{p+2}} e^{-2s\alpha}\xi^3 |u|^2 dx dt\right. &+ \iint_{Q_T} e^{-2s\alpha} |f_1|^2 dx dt\\
&\left.+s\lambda \iint_{\Sigma_T} e^{-2s\alpha^*} \xi^*|f_2|^2 d\sigma dt \right)
\end{align*}
for all $\lambda \geq C$ and $s\geq C(T^5+T^{10})$.
\end{lemma}

We can adapt the Carleman estimate in Lemma~\ref{lemma:carleman_estimate_heat} to system~\eqref{eq:coupled_adjoint_2} with Neumann boundary condition; this adapted Carleman estimate will be used later (cf.~\eqref{eq:first_carleman_inequality}).
\begin{lemma}\label{lemma:carleman_estimate_heat_adapted}
Assume that $\tilde{\psi}^0\in L^2(\Omega)^m$ and $u\in L^2(\Sigma_T)^m$. Then there exists a constant $C:=C(\Omega,\omega_{p+2})>0$ such that the solution to
\begin{equation}\label{eq:general_neumann_parabolic_pde}
\left\{\begin{aligned}
    -\partial_t \tilde{\psi} &=\text{div}(D\nabla \tilde{\psi})-G^*\cdot\nabla\tilde{\psi} + A^*\tilde{\psi}, &&\text{in} \; Q_T, \cr
    \pder{\tilde{\psi}}{n}&=u, &&\text{on} \; \Sigma_T, \cr
    \tilde{\psi}(T,\cdot)&=\tilde{\psi}^0(\cdot), &&\text{in}\;\Omega,
  \end{aligned}\right.
\end{equation}
satisfies
\begin{align}\label{eq:fernandez_cara_carleman}
\mathcal{I}(s,\lambda;\tilde{\psi})\leq C\left(s^3\lambda^4\iint_{(0,T)\times\omega_{p+2}}e^{-2s\alpha}\xi^3\left\Vert\tilde{\psi}\right\Vert_1^2dxdt+s\lambda\iint_{\Sigma_T}e^{-2s\alpha^*}\xi^*\left\Vert u\right\Vert_1^2 d\sigma dt \right)
\end{align}
for all $\lambda \geq C$ and $s\geq C(T^5+T^{10})$.
\end{lemma}

The proof of Lemma~\ref{lemma:carleman_estimate_heat_adapted} can be deduced from Lemma~\ref{lemma:carleman_estimate_heat} and the definitions of $\xi$ and $\alpha$ (one can absorb the integral with coupling terms appearing as the integrand into $\mathcal{I}(s,\lambda;\tilde{\psi})$ on the lefthand side).

We will also use the following estimate in the ensuing treatment (cf.~\eqref{eq:step_2_inequality_1} and~\eqref{eq:step_2_inequality_2}).
\begin{lemma}\cite[Lemma 3]{coron2009null}\label{lemma:coron_result}
Let $r\in\mathbb{R}$. There exists a $C:=C(\Omega, \omega_{p+2}, r)>0$ such that for every $T>0$ and every $u\in L^2((0,T);H^1(\Omega))$,
\begin{align*}
s^{r+2}\lambda^{r+3}\iint_{Q_T}e^{-2s\alpha}\xi^{r+2}|u|^2dxdt\leq &C\Bigg(s^r\lambda^{r+1}\iint_{Q_T}e^{-2s\alpha}\xi^r\left\Vert\nabla u\right\Vert_1^2dxdt \\
&+s^{r+2}\lambda^{r+3}\iint_{(0,T)\times\omega_{p+2}}e^{-2s\alpha}\xi^{r+2}|u|^2dxdt\Bigg)
\end{align*}
for every $\lambda \geq C$ and $s\geq C(T^5+T^{10})$.
\end{lemma}

Finally, one can establish the following Carleman estimate for system~\eqref{eq:coupled_adjoint_2}, which is an extension of~\cite[Proposition 1]{duprez2016indirect}.
\begin{proposition}\label{prop:carleman_estimate}
There exists a constant $C:=C(\Omega,\omega_0)>0$ such that for every $\tilde{\psi}^0\in L^2(\Omega)^m$, the solution $\tilde{\psi}$ to system~\eqref{eq:coupled_adjoint_2} satisfies
\begin{align}\label{eq:carleman_estimate}
\iint_{Q_T} e^{-2s\alpha}\sum_{k=1}^{p+4} s^{2k-1}\lambda^{2k}\xi^{2k-1}&\Vert\nabla^{p+4-k}\tilde{\psi}\Vert_{p+5-k}^2dxdt\nonumber\\
&\leq Cs^{2p+7}\lambda^{2p+8}\iint_{(0,T)\times\omega_0} e^{-2s\alpha} \xi^{2p+7} \left\Vert \tilde{\psi}\right\Vert_1^2 dx dt
\end{align}
for every $\lambda\geq C$ and $s\geq C(T^5+T^{10})$.
\end{proposition}\\
\begin{remark}
It is worth pointing out to the fact that~\eqref{eq:carleman_estimate} contains spatial derivatives past order one, since $\tilde{\psi}^0$ is assumed to be in $L^2(\Omega)^m$, and hence $\tilde{\psi}\in L^2((0,T);H_0^1(\Omega))^m\cap H^1((0,T);L^2(\Omega))^m$. However, due to inequalities~\eqref{eq:p_odd_bound} and~\eqref{eq:p_even_bound} and since the weight $e^{-2s\alpha}$ absorbs the singularity of $\xi$ at $t=0$, one can deduce that these integrals exist.
\end{remark}
\section{Proof of main theorem}\label{sec:null_controllability_analytic_system}
Recall from Section~\ref{sec:problem} that our principal goal was to prove null controllability of system~\eqref{eq:coupled_system} with multiple underactuations. To this end, we studied an algebraic system and an analytic system both related to system~\eqref{eq:coupled_system}. In~\cite{steeves2017part1}, we developed an algebraic method which allowed us to solve the algebraic control problem under the assumption that the source term $\mathbbm{1}_{\omega}\tilde{u}$ be regular enough so that our algebraic solution $\mathcal{B}(\mathbbm{1}_{\omega}\tilde{u})$ be well-defined, where $\mathcal{B}$ is a differential operator of order zero in time and at most $p+2$ in space. In Section~\ref{sec:carleman_estimate}, we established the weighted observability inequality~\eqref{eq:observability_inequality_carleman} for the analytic system~\eqref{eq:analytic_problem_restatement}.

The goal of this section is to solve the analytic control problem~\eqref{eq:analytic_problem} with \emph{regular enough} controls $\mathbbm{1}_{\omega}\tilde{u}$ so that the algebraic control problem~\eqref{eq:algebraic_problem} also be solved. The treatment presented in this section is an extension of that used in~\cite[Section 2.3]{duprez2016indirect}. In particular, since the right inverse $\mathcal{B}$ of $\mathcal{L}$ derived implicitly in~\cite{steeves2017part1} is in general of order at most $p+2$ in space, we require higher regularity of controls in the analytic problem than in~\cite{duprez2016indirect}.
\subsection{An optimal control result}
We do not use the weighted observability inequality to directly deduce null controllability of system~\eqref{eq:analytic_problem_restatement}. Instead, we use a method developed in~\cite{fursikov1996controllability} to construct controls with high regularity which will help us deduce controllability results; to do this, we rely on the following unconstrained optimal control result.
\begin{theorem}\cite[Section 3, Theorem 2.2]{lions1968controle}\label{thm:lions_optimal_control}
Let $y^0\in L^2(\Omega)^m$, $u\in L^2(Q_T)^m$, $B\in\mathscr{L}(L^2(Q_T)^m; L^2(Q_T)^m)$, and suppose $\mathcal{L}$ given in~\eqref{eq:coupled_divergence_operator_2} satisfies the uniform ellipticity condition~\eqref{eq:ellipticity}. Let $N\in\mathscr{L}(L^2(Q_T)^m;L^2(Q_T)^m)$ such that $\left(N u, u\right)_{L^2(Q_T)^m}\geq\nu\Vert u\Vert_{L^2(Q_T)^m}^2$ for $\nu>0$ and for all $u\in L^2(Q_T)^m$, and let $D\in\mathscr{L}(H_0^1(\Omega))^m;H_0^1(\Omega))^m)$. Consider the optimal control problem associated to system~\eqref{eq:coupled_system} with cost functional $J(u):L^2(Q_T)^m\rightarrow \mathbb{R}^+$ given by
\begin{equation}\label{eq:optimal_control_cost_function}
J(u):=\left(Nu,u\right)_{L^2(Q_T)^m}+\left(Dy_u(T,\cdot)-z_d\right)_{L^2(\Omega)^m}^2,
\end{equation}
for some $z_d\in H_0^1(\Omega)^m$. This problem has a unique solution, and the optimal control is characterized by the following relations:
\[\left\{\begin{aligned}
    \partial_t y_u &=\text{div}(D\nabla y_u)+G\cdot\nabla y_u+ Ay_y +Bu, &&\text{in} \; Q_T, \cr
    y_u&=0, &&\text{on} \; \Sigma_T, \cr
    y_u(0,\cdot)&=y^0(\cdot), &&\text{in} \; \Omega,
  \end{aligned}\right.
\]
\[
\left\{\begin{aligned}
    -\partial_t\psi_u&=\text{div}(D\nabla \psi_u)-G^*\cdot\nabla\psi_u + A^*\psi_u,&&\text{in} \; Q_T, \cr
    \psi_u&=0, &&\text{on} \; \Sigma_T, \cr
    \psi_u(T,\cdot)&=D^*\left(Dy_u(T,\cdot)-z_d\right), &&\text{in} \; \Omega,
  \end{aligned}\right.
\]
and
\[
B^*\psi_u+Nu=0.
\]
Hence, for this unconstrained optimal control problem, the second term in~\eqref{eq:optimal_control_cost_function} has no dependence on $u$ (nor do the primal/adjoint systems).
\end{theorem}
\subsection{Null controllability of the analytic problem}
Recall that in~\cite{steeves2017part1}, we fixed a $p$ large enough such that we recovered algebraic solvability of~\eqref{eq:algebraic_problem}. In this section, we establish the following proposition.
\begin{proposition}\label{prop:analytic_null_controllability}
Consider $\theta\in C^{p+2}(\bar{\Omega})$ such that
\begin{equation}\label{eq:theta_definition}
\left\{\begin{aligned}
    &\text{Supp}(\theta) \subseteq \omega, && \cr
    &\theta=1, &&\text{in} \; \omega_0, \cr
    &0\leq\theta\leq 1, &&\text{in} \; \Omega.
  \end{aligned}\right.
\end{equation}
Then there exists $v\in L^2(Q_T)^m$ such that
\[
(\tilde{y},\theta v)\in L^2((0,T);H_0^1(\Omega))^m\cap H^1((0,T);H^{-1}(\Omega))^m \times L^2(Q_T)^m
\]
is a solution to the analytic control problem~\eqref{eq:analytic_problem} satisfying $\tilde{y}(T,\cdot)=0$ in $\Omega$. Moreover, for every $K\in(0,1)$, we have
\begin{align}
&e^{Ks_1\alpha^*}v \in L^2((0,T);H^{p+2}(\Omega)\cap H_0^1(\Omega))^m\cap H^1((0,T);L^2(\Omega))^m, \ \mathrm{and} \nonumber\\
&\Vert e^{Ks_1\alpha^*}v\Vert_{L^2((0,T);H^{p+2}(\Omega)\cap H_0^1(\Omega))^m\cap H^1((0,T);L^2(\Omega))^m} \leq C \Vert \tilde{y}^0\Vert_{L^2(\Omega)^m}.\label{eq:analytic_control_regularity}
\end{align}
\end{proposition}
\begin{proof}
Let $\tilde{y}^0\in L^2(\Omega)^m$, $\rho:=e^{-2s_1\alpha}\xi^{2p+7}$ and $C:=C(\Omega,\omega_0,T)>0$. Let $k\in\mathbb{N}^*$ and denote by $L^2(Q_T, \rho^{-1/2})^m$ the space of functions which, when multiplied by $\rho^{-1/2}$, are $L^2$-integrable (i.e., for $u\in L^2(Q_T, \rho^{-1/2})^m$, we require $\iint_{Q_T} \rho^{-1} \left\Vert u\right\Vert_1^2dxdt < \infty$). Consider the following optimal control problem:
\begin{equation}\label{eq:cost_functional}
\begin{cases}
\text{minimize} \quad J_k(v) := \frac{1}{2}\iint_{Q_T}\rho^{-1}\left\Vert v\right\Vert_1^2dxdt+\frac{k}{2}\int_{\Omega}\left\Vert\tilde{y}(T,\cdot)\right\Vert_1^2dx,\\
\text{subject to} \quad v\in L^2(Q_T, \rho^{-1/2})^m,
\end{cases}
\end{equation}
where $\tilde{y}\in L^2((0,T);H_0^1(\Omega))^m\cap H^1((0,T);H^{-1}(\Omega))^m$. The functional $J_k$ is differentiable, coercive and strictly convex on $L^2(Q_T, \rho^{-1/2})^m$. By Theorem~\ref{thm:lions_optimal_control} (for $D=\sqrt{k}$, $N=\rho^{-1}$ and $z_d=0$ in $Q_T$), there exists a unique solution to this problem, and the optimal control is characterized by the solution $\tilde{y}_k$ to the analytic system
\begin{equation}\label{eq:primal_system_sequence}
\left\{\begin{aligned}
    \partial_t \tilde{y}_k &=\text{div}(D\nabla \tilde{y}_k)+G\cdot \nabla \tilde{y}_k + A\tilde{y}_k +\theta v_k, &&\text{in} \; Q_T, \cr
    \tilde{y}_k&=0, &&\text{on} \; \Sigma_T, \cr
    \tilde{y}_k(0,\cdot)&=\tilde{y}^0(\cdot), &&\text{in} \; \Omega,
  \end{aligned}\right.
\end{equation}
the solution $\tilde{\psi}_k$ to its adjoint system
\begin{equation}\label{eq:adjoint_system_sequence}
\left\{\begin{aligned}
    -\partial_t \tilde{\psi}_k &=\text{div}(D\nabla \tilde{\psi}_k)-G^*\cdot \nabla \tilde{\psi}_k + A^*\tilde{\psi}_k, &&\text{in} \; Q_T, \cr
    \tilde{\psi}_k&=0, &&\text{on} \; \Sigma_T, \cr
    \tilde{\psi}_k(T,\cdot)&=k\tilde{y}(T,\cdot), &&\text{in} \; \Omega,
  \end{aligned}\right.
\end{equation}
and the relation
\begin{equation}\label{eq:control_sequence_equality}
\left\{\begin{aligned}
    v_k&= -\rho\theta\tilde{\psi}_k, \quad \text{in} \; Q_T, \cr
    v_k&\in L^2(Q_T, \rho^{-1/2})^m.
  \end{aligned}\right.
\end{equation}
From~\eqref{eq:primal_system_sequence} and~\eqref{eq:adjoint_system_sequence}, we calculate
\begin{align}\label{eq:calculation_1}
&\int_0^T \left(( \tilde{y}_k,\partial_t\tilde{\psi}_k)_{L^2(\Omega)^m} + (\partial_t\tilde{y}_k, \tilde{\psi}_k)_{L^2(\Omega)^m} \right) dt\nonumber\\
&=\frac{d}{dt}\int_0^T ( \tilde{y}_k,\tilde{\psi}_k)_{L^2(\Omega)^m}dt\nonumber\\
&=( \tilde{y}_k(T,\cdot),k\tilde{y}_k(T,\cdot) )_{L^2(\Omega)^m} - ( \tilde{y}^0,\tilde{\psi}_k(0,\cdot))_{L^2(\Omega)^m},
\end{align}
and
\begin{align}\label{eq:calculation_2}
&(\tilde{y}_k,\partial_t\tilde{\psi}_k)_{L^2(\Omega)^m} + (\partial_t\tilde{y}_k, \tilde{\psi}_k)_{L^2(\Omega)^m}\nonumber\\
&= (\tilde{y}_k,-\text{div}(D\nabla \tilde{\psi}_k)+G^*\cdot \nabla \tilde{\psi}_k - A^*\tilde{\psi}_k)_{L^2(\Omega)^m}\nonumber\\
&+(\text{div}(D\nabla \tilde{y}_k)+G\cdot \nabla \tilde{y}_k + A\tilde{y}_k +\theta v_k,\tilde{\psi}_k)_{L^2(\Omega)^m}\nonumber\\
&=( \theta v_k,\tilde{\psi}_k)_{L^2(\Omega)^m}.
\end{align}
It follows from~\eqref{eq:control_sequence_equality},~\eqref{eq:calculation_1} and~\eqref{eq:calculation_2} that
\begin{align}\label{eq:cost_functional_inequality1}
\scalemath{0.98}{J_k(v_k)}\;&\scalemath{0.98}{= -\frac{1}{2}\int_0^T ( \theta \tilde{\psi}_k,v_k)_{L^2(\Omega)^m} dt + \frac{1}{2} ( \tilde{y}_k(T,\cdot),\tilde{\psi}_k(T,\cdot))_{L^2(\Omega)^m}}\nonumber\\
&\scalemath{0.98}{= -\frac{1}{2}\int_0^T (\tilde{\psi}_k,\theta v_k)_{L^2(\Omega)^m} dt + \frac{1}{2} \int_0^T \left(( \tilde{y}_k,\partial_t\tilde{\psi}_k)_{L^2(\Omega)^m} + (\partial_t\tilde{y}_k, \tilde{\psi}_k)_{L^2(\Omega)^m} \right) dt}\nonumber\\
&\scalemath{0.98}{+\;\frac{1}{2}( y^0, \tilde{\psi}_k(0,\cdot))_{L^2(\Omega)^m}}\scalemath{0.98}{=\frac{1}{2} ( y^0,\tilde{\psi}_k(0,\cdot))_{L^2(\Omega)^m}.}
\end{align}
Moreover, employing the weighted observability inequality~\eqref{eq:observability_inequality_carleman} along with~\eqref{eq:theta_definition}, \eqref{eq:control_sequence_equality}, \eqref{eq:cost_functional}, \eqref{eq:cost_functional_inequality1} and the Cauchy-Schwarz inequality successively, we have
\begin{align*}
\Vert\tilde{\psi}_k(0,\cdot)\Vert_{L^2(\Omega)^m}^2&\leq C_{obs}\iint_{(0,T)\times\omega_0}\rho\theta^2\left\Vert\tilde{\psi}_k\right\Vert_1^2dxdt\\
&\leq C_{obs}\iint_{Q_T}\rho\theta^2\left\Vert\tilde{\psi}_k\right\Vert_1^2dxdt\\
& = C_{obs}\iint_{Q_T} \rho^{-1}\left\Vert v_k\right\Vert_1^2 dxdt\\
& \leq 2 C_{obs} J_k(v_k)\\
& \leq 2 C_{obs} \Vert \tilde{\psi}_k(0,\cdot)\Vert_{L^2(\Omega)^m}\Vert y^0\Vert_{L^2(\Omega)^m},
\end{align*}
from which we deduce
\begin{equation}\label{eq:adjoint_solution_at_zero_inequality}
\Vert\tilde{\psi}_k(0,\cdot)\Vert_{L^2(\Omega)^m} \leq 2 C_{obs} \Vert y^0\Vert_{L^2(\Omega)^m}.
\end{equation}
Furthermore, by~\eqref{eq:cost_functional_inequality1},~\eqref{eq:adjoint_solution_at_zero_inequality} and the Cauchy-Schwarz inequality, we obtain
\begin{equation}\label{eq:cost_functional_inequality2}
J_k(v_k) \leq C_{obs}\Vert y^0\Vert_{L^2(\Omega)^m}^2.
\end{equation}
One can deduce from parabolic regularity,~\eqref{eq:theta_definition} and~\eqref{eq:cost_functional_inequality2} that
\begin{align}\label{eq:y_tilde_sequence_bound}
\Vert \tilde{y}_k\Vert_{L^2((0,T);H_0^1(\Omega))^m\cap H^1((0,T);H^{-1}(\Omega))^m} &\leq C\left( \Vert \theta v_k\Vert_{L^2(Q_T)^m} + \Vert \tilde{y}^0\Vert_{L^2(\Omega)^m} \right) \nonumber\\
&\leq C\left( \Vert v_k\Vert_{L^2(Q_T)^m}+ \Vert \tilde{y}^0\Vert_{L^2(\Omega)^m}\right)\nonumber\\
&\leq C(1+\sqrt{2C_{obs}})\Vert \tilde{y}^0\Vert_{L^2(\Omega)^m},
\end{align}
since for our choice of $s_1$ (which depends on $p$; see~\eqref{eq:choice_of_s}) and by~\eqref{eq:alpha_weight} and~\eqref{eq:xi_weight}, $\rho\leq1$ in $Q_T$. Owing to the well-known result that in Hilbert spaces, bounded sequences have weakly convergent subsequences (see, for example,~\cite{MR1070713}), along with~\eqref{eq:cost_functional}~\eqref{eq:cost_functional_inequality2}, and~\eqref{eq:y_tilde_sequence_bound}, one can extract subsequences of $(v_k)_k$ and $(\tilde{y}_k)_k$ (which we still denote by $v_k$ and $\tilde{y}_k$) such that
\[
\left\{\begin{aligned}
    v_k&\rightharpoonup v &&\text{in} \; L^2(Q_T,\rho^{-1/2})^m, \cr
    \tilde{y}_k&\rightharpoonup \tilde{y} &&\text{in} \; L^2((0,T);H_0^1(\Omega))^m\cap H^1((0,T);H^{-1}(\Omega))^m,\cr
    \tilde{y}_k(T,\cdot)&\rightharpoonup 0 &&\text{in} \;L^2(\Omega)^m.
  \end{aligned}\right.
\]
Hence, $(\tilde{y},\theta v)$ is the solution to the analytic control problem~\eqref{eq:analytic_problem} with
$\theta v \in L^2(Q_T,\rho^{-1/2})$.
Furthermore, we deduce from~\eqref{eq:cost_functional} by taking $k \rightarrow \infty$ that $\tilde{y}(T,\cdot)=0$ (in the sense of Definition~\ref{def:weak_solution}). In addition, by~\eqref{eq:cost_functional_inequality2} and since $\rho\leq 1$ in $Q_T$ for our choice of $s_1$,
\[
\Vert v\Vert_{L^2(Q_T)}^2\leq\sqrt{2C_{obs}}\Vert y^0\Vert_{L^2(\Omega)^m}^2,
\]
as claimed. It is left to show that~\eqref{eq:analytic_control_regularity} is verified. Note that for every $K\in (0,1)$, there exists a $C_K:=C_K(\Omega)$ such that
\begin{equation}\label{eq:exponential_growth_function_bound}
e^{2Ks_1\alpha^*} \leq C_K\xi^{-2p-7}e^{2s_1\alpha},
\end{equation}
for all $(t,x)\in Q_T$. Hence, utilizing~\eqref{eq:exponential_growth_function_bound},~\eqref{eq:cost_functional} and then~\eqref{eq:cost_functional_inequality2}, we obtain
\begin{align}\label{eq:exponential_growth_function_L2_bound}
\Vert e^{2Ks_1\alpha^*}v_k\Vert_{L^2(Q_T)^m}^2&\leq C_K\iint_{Q_T}\rho^{-1}\left\Vert v_k\right\Vert_1^2dxdt\nonumber\\
&\leq C_K\Vert\tilde{y}^0\Vert_{L^2(\Omega)^m}^2.
\end{align}
For $a>0$, one has (see~\eqref{eq:rho_bound1})
\begin{equation}\label{eq:bound_on_time_derivative_weight}
|\partial_t (\xi^a e^{-2s_1\alpha})| \leq CT \xi^{a+6/5} e^{-2s_1\alpha}.
\end{equation}
Furthermore, for $r=\{0,\dots,p+2\}$ one has
\begin{equation}\label{eq:bound_on_spatial_derivatives_weight}
\left\Vert\nabla^r(\xi^a e^{-2s_1\alpha})\right\Vert_{r}\leq C\xi^{a+r}e^{-2s_1\alpha}.
\end{equation}
Indeed,
\begin{align*}
\nabla(\xi^ae^{-2s_1\alpha})&=a\xi^{a-1}\lambda\nabla\eta^0 \xi e^{-2s_1\alpha}-2s_1\xi^ae^{-2s_1\alpha}\left(-\lambda \nabla\eta^0\xi\right)\\
& = \lambda \nabla\eta^0\left(\frac{a}{\xi} + 2s_1\right)\xi^{a+1}e^{-2s_1\alpha},
\end{align*}
and since $C:=C(\Omega,\omega_0,T)$,~\eqref{eq:bound_on_spatial_derivatives_weight} is verified for $r=1$. The same reasoning can be used for the $r$-th derivative, where we have fixed $\eta^0\in C^{p+2}(\bar{\Omega})$. Hence, by~\eqref{eq:control_sequence_equality}, the triangle inequality and then~\eqref{eq:bound_on_spatial_derivatives_weight} for $a=2p+7$, we obtain
\begin{align}\label{eq:exponential_growth_function_H1_bound}
&\Vert e^{Ks_1\alpha^*} \nabla v_k\Vert_{L^2(Q_T)^m}^2\nonumber\\
& = \iint_{Q_T} e^{2Ks_1\alpha^*}\Vert\nabla v_k\Vert_{2}^2 dxdt\nonumber\\
& = \iint_{Q_T} e^{2Ks_1\alpha^*} \Vert\nabla (-\xi^{2p+7}e^{-2s_1\alpha}\theta\tilde{\psi}_k)\Vert_{2}^2 dxdt\nonumber\\
&\leq C\iint_{Q_T} e^{2Ks_1\alpha^*} \left(\left\Vert\nabla(\xi^{2p+7}e^{-2s_1\alpha})\right\Vert_1^2\left\Vert\tilde{\psi}_k\right\Vert_1^2+\left\Vert\xi^{2p+7}e^{-2s_1\alpha}\nabla\tilde{\psi}_k\right\Vert_{2}^2\right) dxdt\nonumber\\
&\leq C\iint_{Q_T} e^{2Ks_1\alpha^*-4s_1\alpha}\left(\xi^{4p+16}\left\Vert\tilde{\psi}_k\right\Vert_1^2 + \xi^{4p+14}\Vert\nabla\tilde{\psi}_k\Vert_{2}^2 \right) dxdt,
\end{align}
and similarly, for $r\in\{1,\dots,p+2\}$, we obtain
\begin{equation}\label{eq:exponential_growth_function_H2_bound}
\Vert e^{Ks_1\alpha^*} \nabla^r v_k\Vert_{L^2(Q_T)^m}^2 \leq C\iint_{Q_T} e^{2Ks_1\alpha^*-4s_1\alpha}\left(\sum_{l=0}^{r} \xi^{4p+14+2l}\Vert\nabla^{r-l}\tilde{\psi}_k\Vert_{r-l+1}^2\right)dxdt.
\end{equation}
By~\eqref{eq:bound_on_time_derivative_weight} and since $\tilde{\psi}_k$ satisfies system~\eqref{eq:adjoint_system_sequence}, we obtain
\begin{align}\label{eq:exponential_growth_function_H1time_bound}
&\Vert \partial_t(e^{Ks_1\alpha^*} v_k)\Vert_{L^2(Q_T)^m}^2\\
&\leq C\iint_{Q_T} e^{2Ks_1\alpha^*-4s_1\alpha}\left(\xi^{(20p+82)/5}\left\Vert\tilde{\psi}_k\right\Vert_1^2+ \xi^{2p+14}\left\Vert\partial_t\tilde{\psi}_k\right\Vert_1^2\right) dxdt\nonumber\\
& \leq C\iint_{Q_T} e^{2Ks_1\alpha^*-4s_1\alpha}\Bigg(\xi^{(20p+82)/5}\left\Vert\tilde{\psi}_k\right\Vert_1^2\nonumber\\
&+ \xi^{2p+14}\left(\Vert\nabla\nabla\tilde{\psi}_k\Vert_{3}^2+\Vert\nabla\tilde{\psi}_k\Vert_{2}^2+\left\Vert\tilde{\psi}_k\right\Vert_1^2\right)\Bigg) dxdt.
\end{align}
Note that for every $a,b>0$ and $K\in(0,1)$, there exists $C_{a,b,K}:=C_{a,b,K}(\Omega)>0$ such that
\begin{equation}\label{eq:exponentials_bound}
\left|\xi^a e^{2Ks_1\alpha^*-4s_1\alpha} \right| \leq C_{a,b,K} \xi^b e^{2s_1\alpha}.
\end{equation}
From~\eqref{eq:exponential_growth_function_L2_bound},~\eqref{eq:exponential_growth_function_H1_bound},~\eqref{eq:exponential_growth_function_H2_bound},~\eqref{eq:exponential_growth_function_H1time_bound} and utilizing~\eqref{eq:exponentials_bound} for appropriate $a$ and $b$,
\begin{align*}
& \Vert e^{Ks_1\alpha^*}v_k\Vert_{L^2((0,T);H^{p+2}(\Omega)\cap H_0^1(\Omega))^m\cap H^1((0,T);L^2(\Omega))^m}\\
& \leq C_{max,K}\iint_{Q_T} e^{-2s_1\alpha}\sum_{k=2}^{p+4}\xi^{2k-1}\Vert\nabla^{p+4-k}\tilde{\psi}_k\Vert_{p+5-k}^2dxdt,
\end{align*}
where
$C_{max,K}:=\max\{\max_{a,b}\{C_{a,b,K}\},C_K\}$.
Owing to~\eqref{eq:theta_definition}, Proposition~\ref{prop:carleman_estimate} and~\eqref{eq:control_sequence_equality}, we deduce
\begin{align*}
&\Vert e^{Ks_1\alpha^*}v_k\Vert_{L^2((0,T);H^{p+2}(\Omega)\cap H_0^1(\Omega))^m\cap H^1((0,T);L^2(\Omega))^m}\\
&\leq C_{max,K}C_{obs}\iint_{Q_T} e^{-2s_1\alpha}\xi^{2p+7}\left\Vert\theta\tilde{\psi}_k\right\Vert_1^2 dxdt= C_{max,K}C_{obs}\Vert v_k\Vert_{L^2(Q_T)^m}^2.
\end{align*}
Lastly, for $\bar{C}_K:=\bar{C}_K(\Omega,\omega_0,T)$,~\eqref{eq:cost_functional_inequality2} yields the inequality
\[
\Vert e^{Ks_1\alpha^*}v_k\Vert_{L^2((0,T);H^{p+2}(\Omega)\cap H_0^1(\Omega))^m\cap H^1((0,T);L^2(\Omega))^m}\leq\bar{C}_{K}\Vert\tilde{y}^0\Vert_{L^2(\Omega)^m},
\]
from which~\eqref{eq:analytic_control_regularity} is verified by taking a convergent subsequence and $k\rightarrow \infty$.
\end{proof}

With algebraic solvability of the algebraic control problem~\eqref{eq:algebraic_problem} and null controllability of the analytic control problem~\eqref{eq:analytic_problem} established for highly regular controls, we can now prove null controllability of the system~\eqref{eq:coupled_system} with internal controls $\hat{u}\in L^2(q_T)^c$, where $c<m-1$.

In Proposition~\ref{prop:analytic_null_controllability}, we showed the existence of $(\tilde{y},\theta v)\in L^2((0,T); H_0^1(\Omega))^m\cap H^1((0,T);H^{-1}(\Omega))^m \times L^2(Q_T)^m$ satisfying
\begin{equation}\label{eq:analytic_system_restatement}
\left\{\begin{aligned}
    \partial_t \tilde{y} &=\text{div}(D\nabla \tilde{y})+G\cdot\nabla\tilde{y} + A\tilde{y} + \theta v,&&\text{in} \; Q_T, \cr
    \tilde{y}&=0, &&\text{on} \; \Sigma_T, \cr
    \tilde{y}(0,\cdot)&=y^0(\cdot), &&\text{in} \; \Omega,
  \end{aligned}\right.
\end{equation}
such that $\tilde{y}(T,\cdot)=0$ in $\Omega$. Furthermore, we established the following higher regularity for $v$:
\begin{equation}\label{eq:analytic_control_regularity_restatement}
e^{Ks_1\alpha^*}v \in L^2((0,T);H^{p+2}(\Omega)\cap H_0^1(\Omega))^m\cap H^1((0,T);L^2(\Omega))^m,
\end{equation}
for all $k\in(0,1)$. Notice that~\eqref{eq:analytic_control_regularity_restatement} implies that $v$ is exponentially decaying as $t\rightarrow 0$ and $t\rightarrow T$. For the linear partial differential operator $\mathcal{B}$ (of order zero in time and at most $p+2$ in space) constructed implicitly in~\cite{steeves2017part1}, let us define
\[
\left(\begin{array}{c}
\hat{y} \\
\hat{u}
\end{array}\right) := \mathcal{B}\left(\theta v\right),
\]
which is well-defined by~\eqref{eq:analytic_control_regularity_restatement}. By virtue of $\mathcal{B}$ being a linear partial differential operator of the stated orders with constant coefficients, we conclude that
\begin{equation}\label{eq:algebraic_solution_regularity}
(\hat{y},\hat{u}) \in L^2(q_T)\times L^2(q_T)^c;
\end{equation}
we then extend $(\hat{y},\hat{u})$ by zero to $Q_T$. Since $v$ decays exponentially as $t\rightarrow 0$ and $t\rightarrow T$, $\hat{y}(0,\cdot)=\hat{y}(T,\cdot)=0$ in $\Omega$. Furthermore, it follows from the discussions in Subsection~\ref{sub:fictitious_control_method} that $(\hat{y},\hat{u})$ is the solution to
\begin{equation}\label{eq:algebraic_system_restatement}
\left\{\begin{aligned}
    \partial_t \hat{y} &=\text{div}(D\nabla \hat{y})+G\cdot\nabla\hat{y} + A\hat{y} + B\hat{u}+\theta v,&&\text{in} \; Q_T, \cr
    \hat{y}&=0, &&\text{on} \; \Sigma_T, \cr
    \hat{y}(0,\cdot)&=\hat{y}(T,\cdot)=0, &&\text{in} \; \Omega,
  \end{aligned}\right.
\end{equation}
where, by~\eqref{eq:algebraic_solution_regularity} and by parabolic regularity,
$(\hat{y},\hat{u})$ satisfies Definition~\ref{def:weak_solution}. Defining $(y,u):=(\tilde{y}-\hat{y},-\hat{u})$, it is immediate that $(y,u)$ is the solution to~\eqref{eq:coupled_system} with $y(T,\cdot)=0$ in $\Omega$. This finishes the proof of Theorem~\ref{thm:main_theorem}.
\section{Conclusion}\label{sec:conclusion}
Using the powerful fictitious control technique, which has allowed us to pose our controllability problem as two interconnected problems, we have derived a sufficient condition for the null controllability of a system of coupled parabolic PDEs, where the couplings were constant in space and time and of first and zero-order and more than half of the equations in the system were actuated. This controllability condition is generic. 

\myclearpage
\bibliographystyle{siam}%
\bibliography{DS}

\begin{thebibliography}{10}

\bibitem{ammar2009generalization}
{\sc F.~Ammar~Khodja, A.~Benabdallah, C.~Dupaix, and M.~Gonz\'alez-Burgos},
  {\em A generalization of the {K}alman rank condition for time-dependent
  coupled linear parabolic systems}, Differ. Equ. Appl., 1 (2009),
  pp.~427--457.

\bibitem{ammar2009kalman}
{\sc F.~Ammar-Khodja, A.~Benabdallah, C.~Dupaix, and M.~Gonz\'alez-Burgos},
  {\em A {K}alman rank condition for the localized distributed controllability
  of a class of linear parbolic systems}, J. Evol. Equ., 9 (2009),
  pp.~267--291.

\bibitem{ammar2011recent}
{\sc F.~Ammar-Khodja, A.~Benabdallah, M.~Gonz\'alez-Burgos, and L.~de~Teresa},
  {\em Recent results on the controllability of linear coupled parabolic
  problems: a survey}, Math. Control Relat. Fields, 1 (2011), pp.~267--306.

\bibitem{benabdallah2014controllability}
{\sc A.~Benabdallah, M.~Cristofol, P.~Gaitan, and L.~de~Teresa}, {\em
  Controllability to trajectories for some parabolic systems of three and two
  equations by one control force}, Math. Control Relat. Fields, 4 (2014),
  pp.~17--44.

\bibitem{bondy1976graph}
{\sc J.A. Bondy and U.S.R. Murty}, {\em {G}raph {T}heory with {A}pplications},
  American Elsevier Publishing Co., Inc., New York, 1976.

\bibitem{bothe2003reaction}
{\sc D.~Bothe and D.~Hilhorst}, {\em A reaction-diffusion system with fast
  reversible reaction}, J. Math. Anal. Appl., 286 (2003), pp.~125--135.

\bibitem{coron2007control}
{\sc J-M. Coron}, {\em Control and {N}onlinearity}, vol.~136 of Mathematical
  Surveys and Monographs, American Mathematical Society, Providence, RI, 2007.

\bibitem{coron2014local}
{\sc J-M. Coron and P.~Lissy}, {\em Local null controllability of the
  three-dimensional {N}avier-{S}tokes system with a distributed control having
  two vanishing components}, Invent. Math., 198 (2014), pp.~833--880.

\bibitem{dulmage1958coverings}
{\sc A.L. Dulmage and N.S. Mendelsohn}, {\em Coverings of bipartite graphs},
  Canad. J. Math., 10 (1958), pp.~517--534.

\bibitem{duprez2016indirect}
{\sc M.~Duprez and P.~Lissy}, {\em Indirect controllability of some linear
  parabolic systems of {$m$} equations with {$m-1$} controls involving coupling
  terms of zero or first order}, J. Math. Pures Appl. (9), 106 (2016),
  pp.~905--934.

\bibitem{ennis2007coupling}
{\sc J.~Ennis-King and L.~Paterson}, {\em Coupling of geochemical reactions and
  convective mixing in the long-term geological storage of carbon dioxide},
  International Journal of Greenhouse Gas Control, 1 (2007), pp.~86--93.

\bibitem{evans1988partial}
{\sc L.C. Evans}, {\em Partial {D}ifferential {E}quations}, vol.~19 of Graduate
  Studies in Mathematics, American Mathematical Society, Providence, RI,
  second~ed., 2010.

\bibitem{gromov2013partial}
{\sc M.~Gromov}, {\em Partial {D}ifferential {R}elations}, vol.~9 of Ergebnisse
  der Mathematik und ihrer Grenzgebiete (3) [Results in Mathematics and Related
  Areas (3)], Springer-Verlag, Berlin, 1986.

\bibitem{guerrero2007null}
{\sc S.~Guerrero}, {\em Null controllability of some systems of two parabolic
  equations with one control force}, SIAM J. Control Optim., 46 (2007),
  pp.~379--394.

\bibitem{hillen2009user}
{\sc T.~Hillen and K.J. Painter}, {\em A user's guide to {PDE} models for
  chemotaxis}, J. Math. Biol., 58 (2009), pp.~183--217.

\bibitem{orlov2002discontinuous}
{\sc Y.~Orlov and D.~Dochain}, {\em Discontinuous feedback stabilization of
  minimum-phase semilinear infinite-dimensional systems with application to
  chemical tubular reactor}, IEEE Trans. Automat. Control, 47 (2002),
  pp.~1293--1304.

\bibitem{pisano2016boundary}
{\sc A.~Pisano, A.~Baccoli, Y.~Orlov, and E.~Usai}, {\em Boundary control of
  coupled reaction-advection-diffusion equations having the same diffusivity
  parameter}, IFAC-PapersOnLine, 49 (2016), pp.~86--91.

\bibitem{pletcher2013computationalfluid}
{\sc R.H. Pletcher, J.C. Tannehill, and D.A. Anderson}, {\em Computational
  {F}luid {M}echanics and {H}eat {T}ransfer}, Series in Computational and
  Physical Processes in Mechanics and Thermal Sciences, CRC Press, Boca Raton,
  FL, third~ed., 2013.

\bibitem{pothen1990computing}
{\sc A.~Pothen and C-J. Fan}, {\em Computing the block triangular form of a
  sparse matrix}, ACM Trans. Math. Software, 16 (1990), pp.~303--324.

\bibitem{shigesada1979spatial}
{\sc N.~Shigesada, K.~Kawasaki, and E.~Teramoto}, {\em Spatial segregation of
  interacting species}, J. Theoret. Biol., 79 (1979), pp.~83--99.

\bibitem{steeves2017part2}
{\sc D.~Steeves, B.~Gharesifard, and A.-R. Mansouri}, {\em Controllability of
  coupled parabolic systems with multiple underactuations, part 2: null
  controllability},  (2017), pp.~1--25.
\newblock http://www.mast.queensu.ca/$\sim$bahman/DS-BG-ARM-part2.pdf.

\bibitem{valli2005control}
{\sc A.M.P. Valli, G.F. Carey, and A.L.G.A. Coutinho}, {\em Control strategies
  for timestep selection in finite element simulation of incompressible flows
  and coupled reaction-convection-diffusion processes}, Internat. J. Numer.
  Methods Fluids, 47 (2005), pp.~201--231.

\bibitem{vazquez2008control}
{\sc R.~Vazquez and M.~Krstic}, {\em Control of {T}urbulent and
  {M}agnetohydrodynamic {C}hannel {F}lows}, Systems \& Control: Foundations \&
  Applications, Birkh\"auser Boston, Inc., Boston, MA, 2008.
\newblock Boundary stabilization and state estimation.

\end{thebibliography}


\begin{thebibliography}{10}

\bibitem{MR1070713}
{\sc J.B. Conway}, {\em A {C}ourse in {F}unctional {A}nalysis}, vol.~96 of
  Graduate Texts in Mathematics, Springer-Verlag, New York, second~ed., 1990.

\bibitem{coron2007control}
{\sc J-M. Coron}, {\em Control and {N}onlinearity}, vol.~136 of Mathematical
  Surveys and Monographs, American Mathematical Society, Providence, RI, 2007.

\bibitem{coron2009null}
{\sc J-M. Coron and S.~Guerrero}, {\em Null controllability of the
  {$N$}-dimensional {S}tokes system with {$N-1$} scalar controls}, J.
  Differential Equations, 246 (2009), pp.~2908--2921.

\bibitem{coron2014local}
{\sc J-M. Coron and P.~Lissy}, {\em Local null controllability of the
  three-dimensional {N}avier-{S}tokes system with a distributed control having
  two vanishing components}, Invent. Math., 198 (2014), pp.~833--880.

\bibitem{duprez2016indirect}
{\sc M.~Duprez and P.~Lissy}, {\em Indirect controllability of some linear
  parabolic systems of {$m$} equations with {$m-1$} controls involving coupling
  terms of zero or first order}, J. Math. Pures Appl. (9), 106 (2016),
  pp.~905--934.

\bibitem{evans1988partial}
{\sc L.C. Evans}, {\em Partial {D}ifferential {E}quations}, vol.~19 of Graduate
  Studies in Mathematics, American Mathematical Society, Providence, RI,
  second~ed., 2010.

\bibitem{fernandez2006null}
{\sc E.~Fern\'andez-Cara, M.~Gonz\'alez-Burgos, S.~Guerrero, and J-P. Puel},
  {\em Null controllability of the heat equation with boundary {F}ourier
  conditions: the linear case}, ESAIM Control Optim. Calc. Var., 12 (2006),
  pp.~442--465.

\bibitem{fursikov1996controllability}
{\sc A.V. Fursikov and O.Y. Imanuvilov}, {\em Controllability of {E}volution
  {E}quations}, vol.~34 of Lecture Notes Series, Seoul National University,
  Research Institute of Mathematics, Global Analysis Research Center, Seoul,
  1996.

\bibitem{lions1968controle}
{\sc J-L. Lions}, {\em Contr{\^o}le {O}ptimal de {S}ystemes {G}ouvern{\'e}s par
  des {\'E}quations aux {D}{\'e}riv{\'e}es {P}artielles}, Dunod;
  Gauthier-Villars, Collier-Macmillan Ltd., 1968.

\bibitem{nirenberg1959elliptic}
{\sc L.~Nirenberg}, {\em On elliptic partial differential equations}, Ann.
  Scuola Norm. Sup. Pisa (3), 13 (1959), pp.~115--162.

\bibitem{steeves2017part1}
{\sc D.~Steeves, B.~Gharesifard, and A.-R. Mansouri}, {\em Controllability of
  coupled parabolic systems with multiple underactuations, part 1: algebraic
  solvability},  (2017), pp.~1--23.
\newblock http://www.mast.queensu.ca/$\sim$bahman/DS-BG-ARM-part1.pdf.

\end{thebibliography}

\section*{Appendix}
\renewcommand{\theequation}{A.\arabic{equation}}
\renewcommand{\thetheorem}{A.\arabic{theorem}}
In a proof to follow, we rely on the so-called Gagliardo-Nirenberg interpolation inequality, which is stated next.
\begin{theorem}\cite{nirenberg1959elliptic}\label{eq:interpolation_inequality}
For $\Omega\subset \mathbb{R}^n$ open, for $q,r\in\mathbb{R}$ such that $1\leq q,r \leq \infty$ and for $m\in\mathbb{N}$, let $u:\Omega\rightarrow \mathbb{R}$ such that $u\in L^q(\Omega) \cap W^{m,r}(\Omega)$. For $0\leq j\leq m$, we have
\begin{equation}\label{eq:gagliardo-nirenberg}
||u||_{W^{j,p}(\Omega)} \leq C ||u||_{W^{m,r}(\Omega)}^\alpha ||u||_{L^q(\Omega)}^{1-\alpha},
\end{equation}
where $p$ satisfies
\[
\frac{1}{p}=\frac{j}{n}+\alpha\left(\frac{1}{r}-\frac{m}{n}\right)+\frac{1-\alpha}{q}
\]
for all $\alpha$ in the interval $\frac{j}{m}\leq \alpha\leq 1$, where $C:=C(n, m, j, q, r, \alpha)$, with the following exceptional assumptions:
\begin{enumerate}
\item if $j=0,$ $rm<n$, $q=\infty$, then we require $u\rightarrow 0$ at infinity, and;
\item if $1<r<\infty$ and $m-j-\frac{n}{r}$ a nonnegative integer, then~\eqref{eq:gagliardo-nirenberg} only holds for $\alpha$ satisfying $\frac{j}{m}\leq \alpha < 1$.
\end{enumerate}
\end{theorem}
\begin{proof}\longthmtitle{Proof of Proposition~\ref{prop:carleman_estimate}}\label{proof:A1}
We denote by $C$ various positive constants which depend on $\Omega$ and $\omega_0$. We define the operator
\begin{equation}\label{eq:elliptic_operator_adjoint}
\mathcal{L}^*:=\left(-\text{div}(D\nabla)+G^*\cdot\nabla-A^*\right).
\end{equation}
By density of $H^k(\Omega)^m\cap H_0^1(\Omega)^m$ in $L^2(\Omega)^m$ for $k\in\mathbb{N}$ (this follows from the inclusion $C_c^\infty(\Omega)^m\subset H^k(\Omega)^m\cap H_0^1(\Omega)^m\subset L^2(\Omega)^m$ and since $C_c^\infty(\Omega)^m$ dense in $L^2(\Omega)^m$), we assume without loss of generality that $\tilde{\psi}^0\in H^{2p+5}(\Omega)^m$ and $\left((\mathcal{L}^*)^k\tilde{\psi}^0\right)_{k=0}^{p+2} \subset H_0^1(\Omega)$. Hence by Theorem~\ref{thm:smoothing_theorem}, the solution $\tilde{\psi}$ to system~\eqref{eq:coupled_adjoint_2} is an element of 
\begin{equation}\label{eq:adjoint_regularity}
L^2((0,T);H^{2p+6}(\Omega))^m\cap H^{p+3}((0,T);L^2(\Omega))^m.
\end{equation}
We apply the differential operator $\nabla^{p+2}$ to system~\eqref{eq:coupled_adjoint_2} and, for $\beta$ a multi-index with $|\beta| = p+2$, we denote $\partial_\beta\tilde{\psi}$ by $\phi_{\beta}$ so that $\phi_{\beta}$ satisfies
\begin{equation}\label{eq:differentiated_adjoint}
\left\{\begin{aligned}
    -\partial_t \phi_{\beta} &=\text{div}(D\nabla \phi_{\beta})-G^*\cdot \nabla \phi_{\beta} + A^*\phi_{\beta}, &&\text{in} \; Q_T, \cr
    \pder{\phi_{\beta}}{n}&=\nabla \phi_{\beta} \cdot \mathbf{n}, &&\text{on} \; \Sigma_T, \cr
    \phi_{\beta}(T,\cdot)&=\partial_\beta\tilde{\psi}^0(\cdot), &&\text{in} \; \Omega.
  \end{aligned}\right.
\end{equation}
Indeed, since $D, \; G^*$ and $A^*$ are constant, $\nabla^{p+2}$ commutes with all the terms in system~\eqref{eq:coupled_adjoint_2}. We define the $(p+3)$-th order tensor $\phi:=(\phi_\beta)_{1\leq\beta_1,\dots,\beta_{p+2}\leq n}$; applying Lemma~\ref{lemma:carleman_estimate_heat_adapted} to system~\eqref{eq:differentiated_adjoint}, we have a Carleman inequality for $\phi$:
\begin{align}\label{eq:first_carleman_inequality}
&\mathcal{I}(s,\lambda;\phi)\nonumber\\
&\leq C\left( s^3\lambda^4 \iint_{(0,T)\times \omega_{p+2}} e^{-2s\alpha} \xi^3 \Vert \phi\Vert_{p+3}^2 dx dt + s\lambda \iint_{\Sigma_T} e^{-2s\alpha^*} \xi^* \Vert \nabla \phi \cdot n\Vert_{p+3}^2 d\sigma dt\right)
\end{align}
for every $\lambda \geq C$ and $s\geq C(T^5+T^{10})$. The rest of this proof follows three steps:
\begin{enumerate}
\item We will estimate the boundary term on the righthand side of~\eqref{eq:first_carleman_inequality} with a global interior term involving $\tilde{\psi}$, which will be absorbed into the lefthand side;
\item we will relate $\mathcal{I}(s,\lambda;\phi)$ with the lefthand side of~\eqref{eq:carleman_estimate};
\item we will estimate the local term on the righthand side of~\eqref{eq:first_carleman_inequality} with a local term of zero differential order (as appearing in~\eqref{eq:carleman_estimate}) and some other local terms which will be absorbed into the lefthand side.
\end{enumerate}
\textbf{Step (i):} Consider a function $\theta \in C^2(\bar{\Omega})$ such that $\nabla \theta \cdot \mathbf{n} = \theta = 1$ in $\bar{\Omega}$, where $\mathbf{n}$ is the outward pointing normal of $\partial\Omega$. With this construction, $\nabla \theta = \mathbf{n}$. Indeed, for any $q\in\partial\Omega$ and for any parametrized curve $\gamma:\mathbb{R}\rightarrow \Omega$ passing through point $q$ at time $0$, we have
\[
\frac{d}{dt}\theta(\gamma(t))\big|_{t=0}=\nabla\theta\big|_q\frac{d\gamma(t)}{dt}\Bigg|_{t=0}=0,
\]
since $\theta=1$ in $\bar{\Omega}$. Hence, since $\nabla\theta$ is orthogonal to the tangent of any curve passing through any arbitrary point $q\in\partial\Omega$ at $t=0$, it must be equal to $\mathbf{n}$. Let $\beta$ and $\gamma$ be multi-indices of length $n$; we integrate the boundary term by parts to obtain
\begin{align*}
&s\lambda\iint_{\Sigma_T}e^{-2s\alpha^*}\xi^*\Vert\nabla\phi\cdot\mathbf{n}\Vert_{p+3}^2 d\sigma dt\\
&=s\lambda\sum_{\left|\beta\right|=p+3}\iint_{\Sigma_T}e^{-2s\alpha^*}\xi^*\left(\partial_\beta\psi\cdot\nabla\theta\right)\left(\partial_\beta\psi\cdot\mathbf{n}\right) d\sigma dt\\
&=\sum_{\substack{\left|\beta\right|=p+3\\ \left|\gamma\right|=p+4}}\Big(s\lambda\iint_{Q_T}e^{-2s\alpha^*}\xi^*\left(\partial_\gamma\psi\right)\left(\partial_\beta\psi\cdot\nabla\theta\right)dxdt \\
&+ s\lambda\iint_{Q_T}e^{-2s\alpha^*} \xi^*\nabla(\partial_\beta\psi\cdot \nabla \theta)\cdot\partial_\beta\psi dxdt\Big).
\end{align*}
Next, we employ Cauchy-Schwarz and Young's inequalities to obtain
\begin{align}\label{eq:inequality_boundary_term}
&s\lambda\iint_{\Sigma_T}e^{-2s\alpha^*}\xi^*\Vert\nabla\phi\cdot\mathbf{n}\Vert_{p+3}^2d\sigma dt\nonumber\\
&\leq C\lambda\left(\int_0^Te^{-2s\alpha^*}(s \xi^*)^{2k}||\tilde{\psi}||_{H^{p+4}(\Omega)^m}^2dt+\int_0^Te^{-2s\alpha^*}(s \xi^*)^{2-2k}||\tilde{\psi}||_{H^{p+3}(\Omega)^m}^2dt\right),
\end{align}
for $k\in(0,1)$ to be chosen later. We define $\hat{\tilde{\psi}}:=\rho\tilde{\psi}$, with $\rho\in C^\infty([0,T])$ defined by $\rho:=(s\xi^*)^ae^{-s\alpha^*}$ for some $a\in\mathbb{R}$ to be chosen later. Note that $\hat{\tilde{\psi}}(T,\cdot)=0$ in $\Omega$, since $\rho$ decays exponentially to zero as $t\rightarrow T$. Similarly, $\frac{d^i}{dt^i}\rho(0)=0$, for all $i\in\mathbb{N}$. Furthermore, $\hat{\tilde{\psi}}$ is the solution to
\begin{equation}\label{eq:psi_hat_system}
\left\{\begin{aligned}
    -\partial_t \hat{\tilde{\psi}}&=\text{div}(D\nabla\hat{\tilde{\psi}})-G^*\cdot\nabla\hat{\tilde{\psi}}+A^*\hat{\tilde{\psi}}-\frac{d}{dt}\rho\tilde{\psi},&&\text{in}\;Q_T,\cr
    \hat{\tilde{\psi}}&=0,&&\text{on}\;\Sigma_T,\cr
    \hat{\tilde{\psi}}(T,\cdot)&=0,&&\text{in}\;\Omega.
  \end{aligned}\right.
\end{equation}
Hence, by~\eqref{eq:adjoint_regularity}, one can utilize Theorem~\ref{thm:smoothing_theorem} to get the estimate
\begin{align}\label{eq:psi_hat_regularity_estimate}
&\Vert \hat{\tilde{\psi}}\Vert_{L^2((0,T);H^{2d+2}(\Omega))^m \cap H^{d+1}((0,T);L^2(\Omega))^m}\nonumber\\
&\leq C\left\Vert\frac{d}{dt}\rho\tilde{\psi}\right\Vert_{L^2((0,T);H^{2d}(\Omega))^m\cap H^{d}((0,T);L^2(\Omega))^m}
\end{align}
for $d\in\left\{0,\dots,p+2\right\}$. Owing to~\eqref{eq:alpha_weight} and~\eqref{eq:xi_weight}, we have the bound
\begin{equation}\label{eq:rho_bound1}
\left|\frac{d}{dt}\rho\right|\leq CT(s\xi^*)^{a+6/5}e^{-s\alpha^*}.
\end{equation}
Indeed, for
$\bar{c}:=\min_{x\in\bar{\Omega}}\{e^{\lambda(10\Vert\eta^0\Vert_\infty+\eta^0(x))}\}$ and
and
$\tilde{c}:=\max_{x\in\bar{\Omega}}\{e^{12\Vert\eta^0\Vert_\infty}-e^{\lambda(10\Vert\eta^0\Vert_\infty+\eta^0(x))}\} $, we have
\begin{align*}
\left|\frac{d}{dt}\rho\right| & = \left|a s (s\xi^*)^{a-1}e^{-s\alpha^*} \frac{d}{dt} \xi^* - s (s\xi^*)^a e^{-s\alpha^*}\frac{d}{dt} \alpha^* \right| \\
&=e^{-s\alpha^*}\left|s(s\xi^*)^{a-1}\frac{5(2t-T)}{t^6(T-t)^6}\left(a\bar{c}-(s\xi^*)\tilde{c}\right)\right|\\
&=(s\xi^*)^ae^{-s\alpha^*}\left|\frac{10t-5T}{t(T-t)} \left(a-\frac{(s\xi^*)\tilde{c}}{\bar{c}}\right)\right|\\
&=(s\xi^*)^{a+6/5}e^{-s\alpha^*}\left|\frac{(10t-5T)}{\bar{c}^{6/5}}\left(\frac{at^5(T-t)^5}{s^{6/5}}-\frac{\tilde{c}}{s^{1/5}}\right)\right|,
\end{align*}
and since $s\geq C(T^5+T^{10})$, one can obtain~\eqref{eq:rho_bound1}. Similarly, we have
\begin{equation}\label{eq:rho_bound2}
\left| \frac{d^r}{dt^r} \rho\right| \leq CT^r(s\xi^*)^{a+6r/5}e^{-s\alpha^*},
\end{equation}
for $r\in\mathbb{N}$. We apply~\eqref{eq:psi_hat_regularity_estimate} to $\hat{\tilde{\psi}}$ for $a=1-k$ and $d=\left\lfloor\frac{p+1}{2}\right\rfloor$ to obtain
\begin{align}\label{eq:psi_hat_inequality_1}
&\int_0^Te^{-2s\alpha^*}(s\xi^*)^{2-2k}\Vert\tilde{\psi}\Vert_{H^{2\left\lfloor\frac{p+3}{2}\right\rfloor}(\Omega)^m}^2dt\nonumber\\
&\leq C\left(\int_0^T\left\Vert\frac{d}{dt}\left(e^{-s\alpha^*}(s\xi^*)^{1-k}\right)\tilde{\psi}\right\Vert_{H^{2\left\lfloor\frac{p+1}{2}\right\rfloor}(\Omega)^m}^2dt\right.\nonumber\\
&\left.+\sum_{r=1}^{\left\lfloor\frac{p+1}{2}\right\rfloor}\int_0^T\left\Vert\frac{d^r}{dt^r}\left(\frac{d}{dt}\left(e^{-s\alpha^*}(s\xi^*)^{1-k}\right)\tilde{\psi}\right)\right\Vert_{L^2(\Omega)^m}^2 dt\right).
\end{align}
We now apply~\eqref{eq:psi_hat_regularity_estimate} to $\hat{\tilde{\psi}}=\frac{d}{dt}\rho \tilde{\psi}$ (which satisfies a system similar to~\eqref{eq:psi_hat_system} and verifies the compatibility conditions in Theorem~\ref{thm:smoothing_theorem}) for $a=1-k$ and $d=\left\lfloor\frac{p+1}{2}\right\rfloor-1$ to obtain
\begin{align}\label{eq:psi_hat_inequality_2}
&\int_0^T\left\Vert\frac{d}{dt}\left(e^{-s\alpha^*}(s\xi^*)^{1-k}\right)\tilde{\psi}\right\Vert_{H^{2\left\lfloor\frac{p+1}{2}\right\rfloor}(\Omega)^m}^2dt\nonumber\\
&+\sum_{r=1}^{\left\lfloor\frac{p+1}{2}\right\rfloor}\int_0^T\left\Vert\frac{d^r}{dt^r}\left(\frac{d}{dt}\left(e^{-s\alpha^*}(s\xi^*)^{1-k}\right)\tilde{\psi}\right)\right\Vert_{L^2(\Omega)^m}^2dt\nonumber\\
&\leq C\int_0^T\left\Vert\frac{d^2}{dt^2}\left(e^{-s\alpha^*}(s\xi^*)^{1-k}\right)\tilde{\psi}\right\Vert_{H^{2\left\lfloor\frac{p+1}{2}\right\rfloor-2}(\Omega)^m}^2dt\nonumber\\
&+\sum_{r=1}^{\left\lfloor\frac{p+1}{2}\right\rfloor-1}\int_0^T\left\Vert\frac{d^r}{dt^r}\left(\frac{d^2}{dt^2}\left(e^{-s\alpha^*}(s\xi^*)^{1-k}\right)\tilde{\psi}\right)\right\Vert_{L^2(\Omega)^m}^2dt.
\end{align}
Repeating this way $\left\lfloor\frac{p+1}{2}\right\rfloor-1$ more times and utilizing~\eqref{eq:rho_bound2} yields the inequality
\begin{align}\label{eq:psi_hat_inequality_3}
&\int_0^Te^{-2s\alpha^*}(s\xi^*)^{2-2k}\Vert\tilde{\psi}\Vert_{H^{2\left\lfloor\frac{p+3}{2}\right\rfloor}(\Omega)^m}^2dt\nonumber\\
&\leq C\int_0^T\left\Vert\frac{d^{\left\lfloor\frac{p+1}{2}\right\rfloor+1}}{dt^{\left\lfloor\frac{p+1}{2}\right\rfloor+1}}\left(e^{-s\alpha^*}(s\xi^*)^{1-k}\right)\tilde{\psi}\right\Vert_{L^2(\Omega)^m}^2dt\nonumber\\
&\leq CT^{2\left\lfloor\frac{p+1}{2}\right\rfloor+2}\int_0^Te^{-2s\alpha^*}(s\xi^*)^{2-2k+\frac{12}{5}\left(\left\lfloor\frac{p+1}{2}\right\rfloor+1\right)}\Vert\tilde{\psi}\Vert_{L^2(\Omega)^m}^2dt.
\end{align}
We can get very similar estimates~\eqref{eq:psi_hat_inequality_1} and~\eqref{eq:psi_hat_inequality_2} for $a=3k-1$, $d=\left\lceil\frac{p+2}{2}\right\rceil$, and by using~\eqref{eq:rho_bound2}, we obtain
\begin{align}\label{eq:psi_hat_inequality_4}
&\int_0^Te^{-2s\alpha^*}(s\xi^*)^{6k-2}\Vert\tilde{\psi}\Vert_{H^{2\left\lceil\frac{p+4}{2}\right\rceil}(\Omega)^m}^2dt\nonumber\\
&\leq C\int_0^T\left\Vert\frac{d^{\left\lceil\frac{p+2}{2}\right\rceil+1}}{dt^{\left\lceil\frac{p+2}{2}\right\rceil+1}}\left(e^{-s\alpha^*}(s\xi^*)^{3k-1}\right)\tilde{\psi}\right\Vert_{L^2(\Omega)^m}^2 dt\nonumber\\
&\leq CT^{2\left\lceil\frac{p+2}{2}\right\rceil+2}\int_0^Te^{-2s\alpha^*}(s\xi^*)^{6k-2+\frac{12}{5}\left(\left\lceil\frac{p+2}{2}\right\rceil+1\right)}\Vert\tilde{\psi}\Vert_{L^2(\Omega)^m}^2dt.
\end{align}
Suppose for the moment that $p$ is odd. By applying Theorem~\ref{eq:interpolation_inequality} to the appropriate spatial derivative of $\tilde{\psi}$ with $j=1,\; m=q=p=r=2$ and $\alpha=1/2$, and then employing the Cauchy-Schwarz inequality, we obtain
\begin{align*}
&\scalemath{0.96}{\int_0^Te^{-2s\alpha^*}(s\xi^*)^{2k}\Vert\tilde{\psi}\Vert_{H^{p+4}(\Omega)^m}^2dt}\\
&\scalemath{0.96}{\leq C\int_0^T\Vert e^{-s\alpha^*}(s\xi^*)^{3k-1}\tilde{\psi}\Vert_{H^{2\left\lceil\frac{p+4}{2}\right\rceil}(\Omega)^m}\Vert e^{-s\alpha^*}(s\xi^*)^{1-k}\tilde{\psi}\Vert_{H^{2\left\lfloor\frac{p+3}{2}\right\rfloor}(\Omega)^m}dt}\\
&\scalemath{0.96}{\leq C\hspace{-4pt}\left(\int_0^T\hspace{-5pt}e^{-2s\alpha^*}(s\xi^*)^{6k-2}\Vert\tilde{\psi}\Vert_{H^{2\left\lceil\frac{p+4}{2}\right\rceil}(\Omega)^m}^2dt\right)^\frac{1}{2}\hspace{-6pt}\left(\int_0^T\hspace{-5pt}e^{-2s\alpha^*}(s\xi^*)^{2-2k}\Vert\tilde{\psi}\Vert_{H^{2\left\lfloor\frac{p+3}{2}\right\rfloor}(\Omega)^m}^2dt\right)^\frac{1}{2}.}
\end{align*}
Choosing $k=\frac{1}{2}+\frac{3}{10}\left(\left\lfloor\frac{p+1}{2}\right\rfloor-\left\lceil\frac{p+2}{2}\right\rceil\right)$ verifies
\[
2-2k+\frac{12}{5}\left(\left\lfloor\frac{p+1}{2}\right\rfloor+1\right)=6k-2+\frac{12}{5}\left(\left\lceil\frac{p+2}{2}\right\rceil+1\right),
\]
and hence by utilizing~\eqref{eq:psi_hat_inequality_3} and~\eqref{eq:psi_hat_inequality_4}, we obtain
\begin{align}\label{eq:p_odd_bound}
&\int_0^Te^{-2s\alpha^*}(s\xi^*)^{2k}\Vert\tilde{\psi}\Vert_{H^{p+4}(\Omega)^m}^2dt\nonumber\\
&\leq CT^{\left\lceil\frac{p+2}{2}\right\rceil+\left\lfloor\frac{p+1}{2}\right\rfloor+2}\int_0^Te^{-2s\alpha^*}(s\xi^*)^{\frac{17}{5}+\frac{9}{5}\left\lfloor\frac{p+1}{2}\right\rfloor+\frac{3}{5}\left\lceil\frac{p+2}{2}\right\rceil}\Vert\tilde{\psi}\Vert_{L^2(\Omega)^m}^2dt.
\end{align}
Identical steps can be followed for the case when $p$ is even to obtain
\begin{align}\label{eq:p_even_bound}
&\int_0^Te^{-2s\alpha^*}(s\xi^*)^{2-2k}\Vert\tilde{\psi}\Vert_{H^{p+3}(\Omega)^m}^2dt\nonumber\\
&\leq CT^{\left\lceil\frac{p+2}{2}\right\rceil+\left\lfloor\frac{p+1}{2}\right\rfloor+2}\int_0^Te^{-2s\alpha^*}(s\xi^*)^{\frac{17}{5}+\frac{3}{5}\left\lfloor\frac{p+1}{2}\right\rfloor+\frac{9}{5}\left\lceil\frac{p+2}{2}\right\rceil}\Vert\tilde{\psi}\Vert_{L^2(\Omega)^m}^2dt.
\end{align}
It follows from~\eqref{eq:inequality_boundary_term},~\eqref{eq:psi_hat_inequality_3} and~\eqref{eq:p_odd_bound} that
\begin{align*}
&\scalemath{0.98}{s\lambda\iint_{\Sigma_T}e^{-2s\alpha^*}\xi^*\Vert\nabla\phi\cdot n\Vert_{p+3}^2d\sigma dt}\\
&\scalemath{0.98}{\leq C\lambda\left(T^{2\left\lfloor\frac{p+1}{2}\right\rfloor+2}+T^{\left\lceil\frac{p+2}{2}\right\rceil+\left\lfloor\frac{p+1}{2}\right\rfloor+2}\right)\int_0^Te^{-2s\alpha^*}(s\xi^*)^{\frac{17}{5}+\frac{9}{5}\left\lfloor\frac{p+1}{2}\right\rfloor+\frac{3}{5}\left\lceil\frac{p+2}{2}\right\rceil}\Vert\tilde{\psi}\Vert_{L^2(\Omega)^m}^2dt,}
\end{align*}
for $p$ odd, and it follows from~\eqref{eq:inequality_boundary_term},~\eqref{eq:psi_hat_inequality_4} and~\eqref{eq:p_even_bound}
\begin{align*}
&\scalemath{0.98}{s\lambda\iint_{\Sigma_T}e^{-2s\alpha^*}\xi^*\Vert\nabla\phi\cdot n\Vert_{p+3}^2d\sigma dt}\\
&\scalemath{0.98}{\leq C\lambda\left(T^{2\left\lceil\frac{p+2}{2}\right\rceil+2}+T^{\left\lceil\frac{p+2}{2}\right\rceil+\left\lfloor\frac{p+1}{2}\right\rfloor+2}\right)\int_0^Te^{-2s\alpha^*}(s\xi^*)^{\frac{17}{5}+\frac{3}{5}\left\lfloor\frac{p+1}{2}\right\rfloor+\frac{9}{5}\left\lceil\frac{p+2}{2}\right\rceil}\Vert\tilde{\psi}\Vert_{L^2(\Omega)^m}^2dt,}
\end{align*}
for $p$ even. In what follows, we choose $p$ even without loss of generality (the exact same technique can be used for $p$ odd), and since
\[
\left(T^{2\left\lceil\frac{p+2}{2}\right\rceil+2}+T^{\left\lceil\frac{p+2}{2}\right\rceil+\left\lfloor\frac{p+1}{2}\right\rfloor+2}\right) \leq Cs^{2p-\frac{3}{5}\left\lfloor\frac{p+1}{2}\right\rfloor-\frac{9}{5}\left\lceil\frac{p+2}{2}\right\rceil +\frac{17}{5}},
\]
for $s\geq C(T^5+T^{10})$, we use~\eqref{eq:alpha_star_weight} and~\eqref{eq:xi_star_weight} to obtain
\begin{align*}
s\lambda\iint_{\Sigma_T}&e^{-2s\alpha^*}\xi^*\Vert\nabla\phi\cdot n\Vert_{p+3}^2d\sigma dt\\
&\leq Cs^{2p+34/5}\lambda\int_0^Te^{-2s\alpha^*}(\xi^*)^{\frac{17}{5}+\frac{9}{5}\left\lfloor\frac{p+1}{2}\right\rfloor+\frac{3}{5}\left\lceil\frac{p+2}{2}\right\rceil}\Vert\tilde{\psi}\Vert_{L^2(\Omega)^m}^2dt\\
&\leq Cs^{2p+34/5}\lambda\iint_{Q_T} e^{-2s\alpha}\xi^{\frac{17}{5}+\frac{9}{5}\left\lfloor\frac{p+1}{2}\right\rfloor+\frac{3}{5}\left\lceil\frac{p+2}{2}\right\rceil}\left\Vert \tilde{\psi}\right\Vert_1^2dxdt.
\end{align*}
Denoting by $l(p)$ the exponent $\frac{17}{5}+\frac{9}{5}\left\lfloor\frac{p+1}{2}\right\rfloor+\frac{3}{5}\left\lceil\frac{p+2}{2}\right\rceil$, we arrive at the end of Step~(i) to conclude that
\begin{align}\label{eq:step_one_conclusion}
&\mathcal{I}(s,\lambda;\phi)\nonumber\\
&\leq C\left(s^3\lambda^4\iint_{(0,T)\times\omega_{p+2}}e^{-2s\alpha}\xi^3\Vert \phi\Vert_{p+3}^2dxdt+s^{2p+34/5}\lambda\iint_{Q_T}e^{-2s\alpha}\xi^{l(p)}\left\Vert \tilde{\psi}\right\Vert_1^2dxdt\right)
\end{align}
for every $\lambda\geq C$ and $s\geq C(T^5+T^{10})$.\\
\textbf{Step (ii):} In this step, we relate $\mathcal{I}(s,\lambda;\phi)$ to the lefthand side of~\eqref{eq:carleman_estimate}. We apply Lemma~\ref{lemma:coron_result} to $\tilde{\psi}$ for $r=2p+5$ to obtain
\begin{align}\label{eq:step_2_inequality_1}
&s^{2p+7}\lambda^{2p+8}\iint_{Q_T}e^{-2s\alpha}\xi^{2p+7}\left\Vert \tilde{\psi}\right\Vert_1^2dxdt \nonumber\\
&\leq C\Bigg(s^{2p+5}\lambda^{2p+6}\iint_{Q_T}e^{-2s\alpha}\xi^{2p+5}\left\Vert\nabla\tilde{\psi}\right\Vert_{2}^2dxdt\nonumber\\
&\left.+s^{2p+7}\lambda^{2p+8}\iint_{(0,T)\times\omega_{p+2}}e^{-2s\alpha}\xi^{2p+7}\left\Vert \tilde{\psi}\right\Vert_1^2dxdt\right),
\end{align}
for every $\lambda\geq C$ and $s\geq C(T^5+T^{10})$. Similarly, for $k\in\{0,\dots,p\}$, we apply Lemma~\ref{lemma:coron_result} to $\nabla^{p+1-k}\tilde{\psi}$ for $r=2k+3$ to obtain
\begin{align}\label{eq:step_2_inequality_2}
&s^{2k+5}\lambda^{2k+6}\iint_{Q_T}e^{-2s\alpha}\xi^{2k+5}\Vert\nabla^{p+1-k}\tilde{\psi}\Vert_{p+2-k}^2dxdt\nonumber\\
&\leq C\Bigg(s^{2k+3}\lambda^{2k+4}\iint_{Q_T}e^{-2s\alpha}\xi^{2k+3}\left\Vert\nabla^{p+2-k}\tilde{\psi}\right\Vert_{p+3-k}^2dxdt\nonumber\\
&\left.+s^{2k+5}\lambda^{2k+6}\iint_{(0,T)\times\omega_{p+2}}e^{-2s\alpha}\xi^{2k+5}\Vert\nabla^{p+1-k}\tilde{\psi}\Vert_{p+2-k}^2dxdt\right),
\end{align}
for every $\lambda\geq C$ and $s\geq C(T^5+T^{10})$. One can upper bound the first term in the righthand side of~\eqref{eq:step_2_inequality_1} by~\eqref{eq:step_2_inequality_2} for $k=p$ and continue this way by backwards iteration on $k$. The global terms on the righthand side of~\eqref{eq:step_2_inequality_2} can be absorbed in the exact same way. Hence, a combination of~\eqref{eq:step_one_conclusion},~\eqref{eq:step_2_inequality_1} and~\eqref{eq:step_2_inequality_2} gives
\begin{align*}
&\iint_{Q_T}e^{-2s\alpha}\sum_{k=1}^{p+4}s^{2k-1}\lambda^{2k}\xi^{2k-1}\Vert\nabla^{p+4-k}\tilde{\psi}\Vert_{p+5-k}^2dxdt\\
&\leq C\Bigg(\iint_{(0,T)\times\omega_{p+2}}e^{-2s\alpha}\sum_{k=2}^{p+4}s^{2k-1}\lambda^{2k}\xi^{2k-1}\Vert\nabla^{p+4-k}\tilde{\psi}\Vert_{p+5-k}^2dxdt\\
&+s^{3}\lambda^{4}\iint_{Q_T}e^{-2s\alpha}\xi^{3}\Vert\nabla^{p+2}\tilde{\psi}\Vert_{p+3}^2dxdt+s^{2p+34/5}\lambda\iint_{Q_T} e^{-2s\alpha}\xi^{l(p)}\left\Vert \tilde{\psi}\right\Vert_1^2 dxdt\Bigg),
\end{align*}
for every $\lambda\geq C$ and $s\geq C(T^5+T^{10})$. By utilizing~\eqref{eq:step_one_conclusion} once more, we arrive at the inequality
\begin{align}\label{eq:step_2_conclusion}
&\iint_{Q_T} e^{-2s\alpha}\sum_{k=1}^{p+4} s^{2k-1}\lambda^{2k}\xi^{2k-1}\Vert\nabla^{p+4-k}\tilde{\psi}\Vert_{p+5-k}^2dxdt\nonumber\\
&\leq C\Bigg(\iint_{(0,T)\times\omega_{p+2}}e^{-2s\alpha}\sum_{k=2}^{p+4}s^{2k-1}\lambda^{2k}\xi^{2k-1}\Vert\nabla^{p+4-k}\tilde{\psi}\Vert_{p+5-k}^2dxdt\nonumber\\
&+s^{2p+34/5}\lambda\iint_{Q_T} e^{-2s\alpha}\xi^{l(p)}\left\Vert \tilde{\psi}\right\Vert_1^2 dxdt\Bigg),
\end{align}
which is verified for every $\lambda\geq C$ and $s\geq C(T^5+T^{10})$.\\
\textbf{Step (iii):} In this final step, we absorb the higher-order local terms in the righthand side of~\eqref{eq:step_2_conclusion}. Consider the function $\theta_{p+1}\in C^2(\bar{\Omega})$ satisfying
\begin{equation}\label{eq:theta_1_definition}
\left\{\begin{aligned}
    &\text{Supp}(\theta_{p+1}) \subseteq \omega_{p+1}, && \cr
    &\theta_{p+1}=1, &&\text{in} \; \omega_{p+2}, \cr
    &0\leq\theta_{p+1}\leq 1 &&\text{in} \; \Omega.
  \end{aligned}\right.
\end{equation}
Let $\beta$ be a multi-index of length $n$. Since $\bar{\omega}_{p+2}\subset \omega_{p+1}$, where $\omega_{p+1}$ is an open subset of $\Omega$, we integrate the rightmost term in~\eqref{eq:step_2_conclusion} by parts and employ the the Cauchy-Schwarz inequality to obtain
\begin{align}\label{eq:step3_bound0}
&\scalemath{0.96}{s^3\lambda^4\iint_{(0,T)\times\omega_{p+2}}e^{-2s\alpha}\xi^3\left\Vert\nabla^{p+2}\tilde{\psi}\right\Vert_{p+3}^2dxdt}\nonumber\\
&\scalemath{0.96}{\leq s^3\lambda^4\iint_{(0,T)\times\omega_{p+1}}\theta_{p+1}e^{-2s\alpha}\xi^3\left\Vert\nabla^{p+2}\tilde{\psi}\right\Vert_{p+3}^2dxdt}\nonumber\\
&\scalemath{0.96}{=-s^3\lambda^4\iint_{(0,T)\times\omega_{p+1}}\sum_{\substack{i=1\\|\beta|=p+1}}^n\left(\partial_i (\theta_{p+1} e^{-2s\alpha}\xi^3) \partial_i\partial_\beta \tilde{\psi} + \theta_{p+1}e^{-2s\alpha}\xi^3\partial_i^2\partial_\beta \tilde{\psi}\right)\left(\partial_\beta\tilde{\psi}\right)dxdt}\nonumber\\
&\scalemath{0.96}{\leq s^3\lambda^4\iint_{(0,T)\times\omega_{p+1}}\Bigg(\left\Vert \nabla\left(\theta_{p+1}e^{-2s\alpha}\xi^3\right)\right\Vert_1\left\Vert\nabla^{p+2}\tilde{\psi}\right\Vert_{p+3}\left\Vert\nabla^{p+1}\tilde{\psi}\right\Vert_{p+2}}\nonumber\\
&\scalemath{0.96}{+\theta_{p+1}e^{-2s\alpha}\xi^3\left\Vert\nabla^{p+3}\tilde{\psi}\right\Vert_{p+4}\left\Vert\nabla^{p+1}\tilde{\psi}\right\Vert_{p+2}\Bigg)dxdt.}
\end{align}
By~\eqref{eq:alpha_weight} and~\eqref{eq:xi_weight}, we have that
\begin{equation}\label{eq:step3_bound1}
\left\Vert \nabla\left(\theta_{p+1}e^{-2s\alpha}\xi^3\right)\right\Vert_1\leq Cs\lambda e^{-2s\alpha}\xi^4.
\end{equation}
Indeed,
\begin{align*}
\left\Vert \nabla\left(\theta_{p+1}e^{-2s\alpha}\xi^3\right)\right\Vert_1&=\left\Vert e^{-2s\alpha}\xi^3\left(\nabla\theta_{p+1}+2s\lambda\theta_{p+1}\xi\nabla\eta^0+3\lambda\theta_{p+1}\nabla\eta^0\right)\right\Vert_1\\
&=s\lambda e^{-2s\alpha}\xi^{4}\left\Vert \frac{\nabla\theta_{p+1}}{s\lambda\xi}+2\theta_{p+1}\nabla\eta^0+\frac{3\theta_{p+1}\nabla\eta^0}{s\xi}\right\Vert_1,
\end{align*}
and since $s\geq C(T^5+T^{10})$,~\eqref{eq:step3_bound1} is verified.
Hence, by~\eqref{eq:theta_1_definition},~\eqref{eq:step3_bound1} and using Young's inequality with $\epsilon>0$, we have
\begin{align}\label{eq:step3_bound2}
&s^3\lambda^4\iint_{(0,T)\times\omega_{p+2}}e^{-2s\alpha}\xi^3\left\Vert\nabla^{p+2}\tilde{\psi}\right\Vert_{p+3}^2dxdt\nonumber\\
&\leq Cs^3\lambda^4\iint_{(0,T)\times\omega_{p+1}}\left(s\lambda e^{-2s\alpha}\xi^4\left\Vert\nabla^{p+2}\tilde{\psi}\right\Vert_{p+3}\left\Vert\nabla^{p+1}\tilde{\psi}\right\Vert_{p+2}\right.\nonumber\\
&\left.+e^{-2s\alpha}\xi^3\left\Vert\nabla^{p+3}\tilde{\psi}\right\Vert_{p+4}\left\Vert\nabla^{p+1}\tilde{\psi}\right\Vert_{p+2}\right)dxdt\nonumber\\
& \leq C\iint_{(0,T)\times\omega_{p+1}}e^{-2s\alpha}\left(\epsilon s^3\lambda^4\xi^3\left\Vert\nabla^{p+2}\tilde{\psi}\right\Vert_{p+3}^2+\epsilon s\lambda^2\xi\left\Vert\nabla^{p+3}\tilde{\psi}\right\Vert_{p+4}^2\right.\nonumber\\
&\left.+\frac{2}{\epsilon}s^5\lambda^6\xi^5\left\Vert\nabla^{p+1}\tilde{\psi}\right\Vert_{p+2}^2\right)dxdt.
\end{align}
Observe that the first two terms in the righthand side of~\eqref{eq:step3_bound2} can be bounded above by employing~\eqref{eq:step_2_conclusion} and~\eqref{eq:step3_bound2} recursively: indeed, by positivity of the integrand in $Q_T$ and by~\eqref{eq:step_2_conclusion}, we obtain
\begin{align}\label{eq:step3_bound3}
&\scalemath{0.96}{\epsilon\iint_{(0,T)\times\omega_{p+1}}e^{-2s\alpha}\left(s^3\lambda^4\xi^3\left\Vert\nabla^{p+2}\tilde{\psi}\right\Vert_{p+3}^2+s\lambda^2\xi\left\Vert\nabla^{p+3}\tilde{\psi}\right\Vert_{p+4}^2\right)dxdt}\nonumber\\
&\scalemath{0.96}{\leq C\epsilon\Bigg(\iint_{(0,T)\times\omega_{p+2}}e^{-2s\alpha}\sum_{k=2}^{p+4}s^{2k-1}\lambda^{2k}\xi^{2k-1}\Vert\nabla^{p+4-k}\tilde{\psi}\Vert_{p+5-k}^2dxdt}\nonumber\\
&\scalemath{0.96}{+s^{2p+34/5}\lambda\iint_{Q_T}e^{-2s\alpha}\xi^{l(p)}\left\Vert \tilde{\psi}\right\Vert_1^2dxdt\Bigg)}\nonumber\\
&\scalemath{0.96}{=C\epsilon\Bigg(\iint_{(0,T)\times\omega_{p+2}}e^{-2s\alpha}\sum_{k=3}^{p+4}s^{2k-1}\lambda^{2k}\xi^{2k-1}\Vert\nabla^{p+4-k}\tilde{\psi}\Vert_{p+5-k}^2dxdt}\nonumber\\
&\scalemath{0.96}{+s^3\lambda^4\iint_{(0,T)\times\omega_{p+2}}e^{-2s\alpha}\xi^3\left\Vert\nabla^{p+2}\tilde{\psi}\right\Vert_{p+3}^2dxdt+s^{2p+34/5}\lambda\iint_{Q_T} e^{-2s\alpha}\xi^{l(p)}\left\Vert \tilde{\psi}\right\Vert_1^2dxdt\Bigg),}
\end{align}
for $\lambda\geq C$ and $s\geq C(T^5+T^{10})$. Combining~\eqref{eq:step3_bound3} and~\eqref{eq:step3_bound2} yields
\begin{align}\label{eq:step3_bound4}
&\epsilon\iint_{(0,T)\times\omega_{p+1}}e^{-2s\alpha}\left(s^3\lambda^4\xi^3\left\Vert\nabla^{p+2}\tilde{\psi}\right\Vert_{p+3}^2+s\lambda^2\xi\left\Vert\nabla^{p+3}\tilde{\psi}\right\Vert_{p+4}^2\right) dxdt\nonumber\\
&\leq C\Bigg(\epsilon\iint_{(0,T)\times\omega_{p+2}}e^{-2s\alpha}\sum_{k=3}^{p+4}s^{2k-1}\lambda^{2k}\xi^{2k-1}\Vert\nabla^{p+4-k}\tilde{\psi}\Vert_{p+5-k}^2dxdt\nonumber\\
&+\iint_{(0,T)\times\omega_{p+1}}e^{-2s\alpha}\epsilon^2\left(s^3\lambda^4\xi^3\left\Vert\nabla^{p+2}\tilde{\psi}\right\Vert_{p+3}^2+s\lambda^2\xi\left\Vert\nabla^{p+3}\tilde{\psi}\right\Vert_{p+4}^2\right)\nonumber\\
&+\iint_{(0,T)\times\omega_{p+1}}e^{-2s\alpha}2s^5\lambda^6\xi^5\left\Vert\nabla^{p+1}\tilde{\psi}\right\Vert_{p+2}^2dxdt\nonumber\\
&+\epsilon s^{2p+34/5}\lambda\iint_{Q_T} e^{-2s\alpha}\xi^{l(p)}\left\Vert \tilde{\psi}\right\Vert_1^2dxdt\Bigg),
\end{align}
for $\lambda\geq C$ and $s\geq C(T^5+T^{10})$. Using the same treatment by adapting~\eqref{eq:step3_bound2}, one can bound the terms with $\epsilon^2$ in~\eqref{eq:step3_bound4}; after $r$ of these recursions, 
\begin{align*}
&\epsilon\iint_{(0,T)\times\omega_{p+1}}e^{-2s\alpha}\left(s^3\lambda^4\xi^3\left\Vert\nabla^{p+2}\tilde{\psi}\right\Vert_{p+3}^2+s\lambda^2\xi\left\Vert\nabla^{p+3}\tilde{\psi}\right\Vert_{p+4}^2\right) dxdt\\
&\leq C\sum_{j=1}^r\Bigg(\epsilon^j \iint_{(0,T)\times\omega_{p+2}}e^{-2s\alpha}\sum_{k=3}^{p+4}s^{2k-1}\lambda^{2k}\xi^{2k-1}\Vert\nabla^{p+4-k}\tilde{\psi}\Vert_{p+5-k}^2dxdt\\
&+\epsilon^{2(r+1)}\iint_{(0,T)\times\omega_{p+1}} e^{-2s\alpha}\left(s^3\lambda^4\xi^3\left\Vert\nabla^{p+2}\tilde{\psi}\right\Vert_{p+3}^2+s\lambda^2\xi\left\Vert\nabla^{p+3}\tilde{\psi}\right\Vert_{p+4}^2\right.\\
&\left.+2js^5\lambda^6\xi^5\left\Vert\nabla^{p+1}\tilde{\psi}\right\Vert_{p+2}^2\right)dxdt+\epsilon^js^{2p+34/5}\lambda\iint_{Q_T} e^{-2s\alpha}\xi^{l(p)}\left\Vert \tilde{\psi}\right\Vert_1^2dxdt\Bigg),
\end{align*}
for $\lambda\geq C$ and $s\geq C(T^5+T^{10})$. Taking $\epsilon$ sufficiently small and using~\eqref{eq:step3_bound2},
\begin{align}\label{eq:step3_bound5}
&s^3\lambda^4\iint_{(0,T)\times\omega_{p+2}}e^{-2s\alpha}\xi^3\left\Vert\nabla^{p+2}\tilde{\psi}\right\Vert_{p+3}^2dxdt\nonumber\\
&\leq C\Bigg(\iint_{(0,T)\times\omega_{p+2}}e^{-2s\alpha}\sum_{k=3}^{p+4}s^{2k-1}\lambda^{2k}\xi^{2k-1}\Vert\nabla^{p+4-k}\tilde{\psi}\Vert_{p+5-k}^2dxdt\nonumber\\
&+s^{2p+34/5}\lambda\iint_{Q_T} e^{-2s\alpha}\xi^{l(p)}\left\Vert \tilde{\psi}\right\Vert_1^2dxdt\Bigg),
\end{align}
for $\lambda\geq C$ and $s\geq C(T^5+T^{10})$, since by~\eqref{eq:step3_bound0}, if $\left\Vert\nabla^{p+2}\tilde{\psi}\right\Vert_{p+3}=0$, then so does $\left\Vert\nabla^{p+3}\tilde{\psi}\right\Vert_{p+4}$. Hence from~\eqref{eq:step3_bound5}, we obtain
\begin{align}\label{eq:step3_bound6}
\iint_{(0,T)\times\omega_{p+2}}&e^{-2s\alpha}\sum_{k=2}^{p+4}s^{2k-1}\lambda^{2k}\xi^{2k-1}\Vert\nabla^{p+4-k}\tilde{\psi}\Vert_{p+5-k}^2dxdt\nonumber\\
&\leq C\iint_{(0,T)\times\omega_{p+1}}e^{-2s\alpha}\sum_{k=3}^{p+4}s^{2k-1}\lambda^{2k}\xi^{2k-1}\Vert\nabla^{p+4-k}\tilde{\psi}\Vert_{p+5-k}^2dxdt,
\end{align}
for $\lambda\geq C$ and $s\geq C(T^5+T^{10})$. For $r\in\{1,\dots,p+1\}$, consider the functions $\theta_{r}\in C^2(\bar{\Omega})$ satisfying
\[
\left\{\begin{aligned}
    &\text{Supp}(\theta_{p+1-r})\subseteq\omega_{p+1-r},&&\cr
    &\theta_{p+1-r}=1,&&\text{in}\;\omega_{p+2-r},\cr
    &0\leq\theta_{p+1-k}\leq 1,&&\text{in}\;\Omega.
  \end{aligned}\right.
\]
Using the exact same approach as was used for $r=0$, one obtains the estimate
\begin{align*}
s^{2r+3}\lambda^{2r+4}&\iint_{(0,T)\times\omega_{p+2-r}}e^{-2s\alpha}\xi^{2r+3}\left\Vert\nabla^{p+2-r}\tilde{\psi}\right\Vert_{p+3-r}^2dxdt\\
&\leq C\Bigg(\iint_{(0,T)\times\omega_{p+2}}e^{-2s\alpha}\sum_{k=3+r}^{p+4}s^{2k-1}\lambda^{2k}\xi^{2k-1}\Vert\nabla^{p+4-k}\tilde{\psi}\Vert_{p+5-k}^2dxdt\nonumber\\
&+s^{2p+34/5}\lambda\iint_{Q_T} e^{-2s\alpha}\xi^{l(p)}\left\Vert \tilde{\psi}\right\Vert_1^2dxdt\Bigg),
\end{align*}
for $\lambda\geq C$ and $s\geq C(T^5+T^{10})$. Hence, it follows that
\begin{align}\label{eq:step3_bound7}
&\iint_{Q_T} e^{-2s\alpha}\sum_{k=1}^{p+4} s^{2k-1}\lambda^{2k}\xi^{2k-1}\Vert\nabla^{p+4-k}\tilde{\psi}\Vert_{p+5-k}^2dxdt\nonumber\\
&\leq C\Bigg(s^{2p+7}\lambda^{2p+8}\iint_{(0,T)\times\omega_{0}}e^{-2s\alpha}\xi^{2p+7}\left\Vert \tilde{\psi}\right\Vert_1^2dxdt\nonumber\\
&+s^{2p+34/5}\lambda\iint_{Q_T} e^{-2s\alpha}\xi^{l(p)}\left\Vert \tilde{\psi}\right\Vert_1^2 dxdt\Bigg),
\end{align}
for $\lambda\geq C$ and $s\geq C(T^5+T^{10})$. Finally, by~\eqref{eq:xi_weight} we have the estimate
\[
s^{2p+34/5}\lambda\iint_{Q_T} e^{-2s\alpha}\xi^{l(p)}\left\Vert \tilde{\psi}\right\Vert_1^2dxdt\leq Cs^{2p+7}\lambda^{2p+8}\iint_{Q_T} e^{-2s\alpha}\xi^{2p+7}\left\Vert \tilde{\psi}\right\Vert_1^2 dxdt,
\]
for $\lambda\geq C$ and $s\geq C(T^5+T^{10})$ large enough; from now on, we denote this choice of $s$ by $s_0$. Hence, one can absorb the global term in the righthand side of~\eqref{eq:step3_bound7} into its lefthand side, and thus~\eqref{eq:carleman_estimate} is verified.
\end{proof}
\begin{proof}\longthmtitle{Proof of Proposition~\ref{prop:observability_inequality}}\label{proof:A2}
We denote by $C$ various positive constant depending on $\Omega$ and $\omega_0$. From~\eqref{eq:carleman_estimate}, we deduce
\begin{equation}\label{eq:carleman_estimate_condensed}
\iint_{Q_T} e^{-2s\alpha}\xi^{2p+7}\left\Vert \tilde{\psi}\right\Vert_1^2 dxdt \leq C\iint_{(0,T)\times\omega_0} e^{-2s\alpha} \xi^{2p+7}\left\Vert \tilde{\psi}\right\Vert_1^2 dx dt,
\end{equation}
for $\lambda \geq C$ and $s\geq s_0$. Note that for $t\in\left[\frac{T}{4},\frac{3T}{4}\right]$, we have
\begin{align}\label{eq:min_weight_function}
&\min_{t\in\left[\frac{T}{4},\frac{3T}{4}\right]}\{e^{-2s\alpha}\xi^{2p+7}\}\nonumber\\
&=\left(e^{-2s\alpha}\xi^{2p+7}\right)\left(\frac{T}{4},\cdot\right)=\left(e^{-2s\alpha}\xi^{2p+7}\right)\left(\frac{3T}{4},\cdot\right)\nonumber\\
&= \left(e^{-2s\frac{4^{10}}{3^5}\left(\frac{e^{12\lambda\Vert\eta^0\Vert_\infty} - e^{\lambda(10\Vert\eta^0\Vert_\infty+\eta^0(x))}}{T^{10}}\right)}\right) \left(\frac{4^{10}e^{(2p+7)\lambda (10\Vert\eta^0\Vert_\infty+\eta^0(x))}}{3^5T^{10}}\right).
\end{align}
We can choose $s$ sufficiently large such that 
\begin{equation}\label{eq:weight_lower_bound}
\frac{4^{10}}{3^5T^{10}}e^{-\frac{s}{T^{10}}} \leq e^{-2s\alpha}\xi^{2p+7},
\end{equation}
for all $t\in\left[\frac{T}{4},\frac{3T}{4}\right]$. Indeed, choosing
\begin{align}\label{eq:choice_of_s}
s& \geq s_1:=\max\left\{s_0,\left(\frac{3^5(2p+7)\lambda}{4^{10}}\right)\max_{x\in\bar{\Omega}}\left\{\frac{10\Vert\eta^0\Vert_\infty + \eta^0(x)}{e^{12\lambda\Vert\eta^0\Vert_\infty}-e^{\lambda(10\Vert\eta^0\Vert_\infty + \eta^0(x))}}\right\}\right\}
\end{align}
in~\eqref{eq:min_weight_function} will ensure that~\eqref{eq:weight_lower_bound} is verified. Note that we can write $s_1$ as $s_1 = \sigma \left(T^5+T^{10}\right)$, where $\sigma>0$ depends only on $\Omega$ and $\omega_0$. Fixing $s=s_1$ from now on, we deduce from~\eqref{eq:carleman_estimate_condensed} and \eqref{eq:weight_lower_bound} that
\begin{align*}
\iint_{\left(\frac{T}{4},\frac{3T}{4}\right)\times\Omega} \left\Vert \tilde{\psi}\right\Vert_1^2 dxdt \leq CT^{10}e^{C\left(1+1/T^5\right)} \iint_{(0,T)\times\omega_0} e^{-2s_1\alpha} \xi^7\left\Vert \tilde{\psi}\right\Vert_1^2 dx dt
\end{align*}
for every $\lambda\geq C$ and $s\geq s_1$. We claim that
\begin{equation}\label{eq:quarter_T_lower_bound}
\int_\Omega\left\Vert \tilde{\psi}(T/4,\cdot)\right\Vert_1^2dx\leq \frac{C}{T}e^{CT/2}\iint_{\left(\frac{T}{4},\frac{3T}{4}\right)\times\Omega}\left\Vert \tilde{\psi}\right\Vert_1^2dxdt
\end{equation}
and
\begin{equation}\label{eq:time_zero_lower_bound}
\int_\Omega\left\Vert \tilde{\psi}(0,\cdot)\right\Vert_1^2dx\leq e^{CT/4}\int_\Omega\left\Vert \tilde{\psi}(T/4,\cdot)\right\Vert_1dx,
\end{equation}
from which we can deduce~\eqref{eq:observability_inequality_carleman}. Indeed, we can multiply system~\eqref{eq:coupled_adjoint_2} by $\tilde{\psi}$, integrate the resulting equation by parts over $\Omega$ and use the Cauchy-Schwarz and Young's inequalities to obtain
\begin{align*}
-\frac{1}{2}\frac{d}{dt}\int_\Omega\left\Vert \tilde{\psi}\right\Vert_1^2dx+D\int_\Omega\Vert\nabla\tilde{\psi}\Vert_2^2dx
&=-\int_\Omega \left(\partial_t\tilde{\psi}\right)\tilde{\psi} dx+\int_\Omega\text{div}(D\nabla\tilde{\psi})\tilde{\psi} dx\\
&=-\int_\Omega\left(G^*\cdot\nabla\tilde{\psi} \right)\tilde{\psi} dx+\int_\Omega \left(A^*\tilde{\psi}\right)\tilde{\psi} dx\\
&\leq\frac{1}{2}\int_\Omega\left\Vert G^*\cdot\nabla\tilde{\psi}\right\Vert_1^2dx+\left(1+\frac{\Vert A^*\Vert_\infty}{2}\right)\int_\Omega \left\Vert \tilde{\psi}\right\Vert_1^2dx.
\end{align*}
Hence, since~\eqref{eq:coupled_divergence_operator_2} satisfies the uniform ellipticity condition (see~\eqref{eq:ellipticity}), we obtain
\[
-\frac{d}{dt} \int_\Omega \left\Vert \tilde{\psi}\right\Vert_1^2 dx + \int_\Omega\Vert\nabla\tilde{\psi}\Vert_2^2 dx \leq C \int_\Omega \left\Vert \tilde{\psi}\right\Vert_1^2 dx,
\]
from which we deduce
\begin{align}\label{eq:integrating_factor}
\frac{d}{dt} \left(e^{Ct} \int_\Omega \left\Vert \tilde{\psi}\right\Vert_1^2 dx\right) & = e^{Ct} \left( C\int_\Omega \left\Vert \tilde{\psi}\right\Vert_1^2 dx + \frac{d}{dt}\int_\Omega \left\Vert \tilde{\psi}\right\Vert_1^2 dx\right) \geq e^{Ct} \int_\Omega\Vert\nabla\tilde{\psi}\Vert_2^2 dx\geq 0,
\end{align}
for all $t>0$. We integrate~\eqref{eq:integrating_factor} over $\left[\frac{T}{4},t\right]$ to obtain
\begin{align}\label{eq:quarter_T_integral}
\int_{\Omega}\left\Vert \tilde{\psi}\right\Vert_1^2dx&\geq e^{C(T/4-t)}\int_{\Omega}\left\Vert \tilde{\psi}\left(T/4,\cdot\right)\right\Vert_1^2dx
\geq e^{-CT/2}\int_{\Omega}\left\Vert \tilde{\psi}\left(T/4,\cdot\right)\right\Vert_1dx,
\end{align}
for every $t\in\left[\frac{T}{4},\frac{3T}{4}\right]$. Integrating~\eqref{eq:quarter_T_integral} once more over $\left[\frac{T}{4},\frac{3T}{4}\right]$ now yields~\eqref{eq:quarter_T_lower_bound}. Finally, to show~\eqref{eq:time_zero_lower_bound}, we integrate~\eqref{eq:integrating_factor} over $t\in\left[0,\frac{T}{4}\right]$. 
\end{proof}
\end{document}